\def\nochapnum{\par
  \setcounter{chapter}{0}
  \setcounter{section}{0}
  \def\@chapapp{}
}
\def\chapnum{\par
  \setcounter{chapter}{0}
  \setcounter{section}{0}
  \def\@chapapp{Chapter}
}
\newcommand{\factbox}[3]
{
  \mbox{ }\\
  \centerline{%
    \framebox[0.9\textwidth]{%
      \ifthenelse
      {\equal{#2}{}}
      {\parbox{0.8\columnwidth}{\begin{fact}\label{#1} \mbox{ } \\{#3}\end{fact}}}
      {\parbox{0.8\columnwidth}{\begin{fact}[#2]\label{#1}\mbox{ }\\ {#3} \end{fact}}}
    }
    \mbox{ }\\
  }
}
\newcommand{\mysymb}[2]{
  \noindent
  \begin{tabular}[t]{p{2cm}@{\hspace{1em}:\hspace{1em}}p{9cm}}
    \makebox[2cm][r]{$#1$} & #2\end{ta}\\
}
\newcommand{\bd}{\begin{displaymath}}
    \newcommand{\ed}{\end{displaymath}}
\newcommand{\weg}[1]{}
\newcommand{\identm}[1]{\boldsymbol{\mathit{I}}_{#1}}
\newcommand{\zerom}[1]{\boldsymbol{\mathit{0}}_{#1}}
\newcommand{\htrans}[2]%
{\vphantom{T}_{#2}^{\hspace*{0.1ex}#1}%
{\hspace*{-0.1ex}\boldsymbol{T}}\vphantom{T}}
\newcommand{\expnumber}[2]{{#1}\mathrm{e}{#2}}
\newcommand{\dotT}[2]%
{\vphantom{T}_{#2}^{\hspace*{0.1ex}#1}%
{\dot{\hspace*{-0.2ex}\boldsymbol{T}}}\vphantom{T}}
\newcommand{\ddotT}[2]%
{\vphantom{T}_{#2}^{\hspace*{0.1ex}#1}%
{\ddot{\hspace*{-0.2ex}\boldsymbol{T}}}\vphantom{T}}
\newcommand{\infhtrans}[2]%
{\vphantom{T}_{#2}^{\hspace*{0.1ex}#1}%
{\hspace*{-0.3ex}\boldsymbol{T}}_{\hspace*{-0.8ex}\Delta}\vphantom{T}}
\newcommand{\esthtrans}[2]%
{\sideset{_{#2}^{\hspace*{0.1ex}#1}}{}{\vphantom{T}}%
  {\hspace*{-0.2ex}\mathop{\widehat{\boldsymbol{T}}}}\vphantom{T}}
\newcommand{\rot}[2]%
{\vphantom{R}_{#2}^{\hspace*{0.1ex}#1}%
{\hspace*{-0.2ex}\boldsymbol{R}}\vphantom{R}}
\newcommand{\dotR}[2]%
{\vphantom{R}_{#2}^{\hspace*{0.1ex}#1}%
{\dot{\hspace*{-0.2ex}\boldsymbol{R}}}\vphantom{R}}
\newcommand{\strans}[2]%
{\vphantom{S}_{#2}^{\hspace*{0.2ex}#1}%
{\hspace*{-0.3ex}\boldsymbol{S}}\vphantom{S}}
\newcommand{\dotS} [2]%
{\vphantom{S}_{#2}^{\hspace*{0.3ex}#1}%
{\dot{\hspace*{-0.3ex}\boldsymbol{S}}}\vphantom{S}}
\newcommand{\infstrans}[2]%
{\vphantom{S}_{#2}^{\hspace*{0.2ex}#1}%
{\hspace*{-0.3ex}\boldsymbol{S}}_{\hspace*{-0.3ex}\Delta}\vphantom{S}}
\newcommand{\sproj}[2]%
{\vphantom{P}_{#2}^{\hspace*{0.2ex}#1}%
{\hspace*{-0.3ex}\boldsymbol{P}}\vphantom{P}}
\newcommand{\dotP} [2]%
{\vphantom{P}_{#2}^{\hspace*{0.3ex}#1}%
{\dot{\hspace*{-0.3ex}\boldsymbol{P}}}\vphantom{P}}
\newcommand{\ptrans}[2]%
{\vphantom{M}_{#2}^{\hspace*{0.2ex}#1}%
{\hspace*{-0.3ex}\boldsymbol{M}}\vphantom{M}}
\newcommand{\dotM}[2]%
{\dot{\hspace*{-0.4ex}\boldsymbol{M}}\vphantom{M}^{#2}_{\hspace*{-0.3ex}#1}}
\newcommand{\infptrans}%
{\boldsymbol{M}_{\hspace*{-0.5ex}\Delta}}
\def\hours{\n=\time \divide\n 60
  \m=-\n \multiply\m 60 \advance\m \time
  \twodigits\n\ :\ \twodigits\m}
\def\twodigits#1{\ifnum #1<10 0\fi \number#1}
\def\realnumbers{\mathbb{R}}
\newcommand{\realvectors}[1]{\mathbb{R}^{#1}}
\newcommand{\realmatrices}[2]{\mathbb{R}^{#1\times#2}}
\newcommand{\symmetricmatrices}[1]{\mathbb{S}^{#1}}
\newcommand{\posdefsymmetricmatrices}[1]{\mathbb{S}^{#1}_{++}}
\newcommand{\possemdefsymmetricmatrices}[1]{\mathbb{S}^{#1}_{+}}
\newcommand{\lowertriangularmatrices}[1]{\mathbb{L}^{#1}}
\newcommand{\uppertriangularmatrices}[1]{\mathbb{U}^{#1}}
\newcommand{\posdefdiagonalmatrices}[1]{\mathbb{D}^{#1}_{++}}
\newcommand{\permutationmatrices}[1]{\mathbb{P}^{#1}}
\newcommand{\orthogonalmatrices}[1]{\mathbb{Q}^{#1}}
\newcommand{\transpose}{^\prime}
\newcommand{\mintranspose}{^{\prime^{-1}}} 
\newcommand{\N}{T}
\newcommand{\pkg}[1]{\textsc{#1}}
\DeclareMathOperator*{\minimize}{minimize}
\DeclareRobustCommand{\cpluspluslogo}{\hbox{C\hspace{-0.5ex}
                       \protect\raisebox{0.5ex}
                       {\protect\scalebox{0.67}{++}}}}
\newcommand{\orcidm}[1]{\href{https://orcid.org/#1}{\textcolor[HTML]{A6CE39}{\includegraphics[keepaspectratio,width=0.7em]{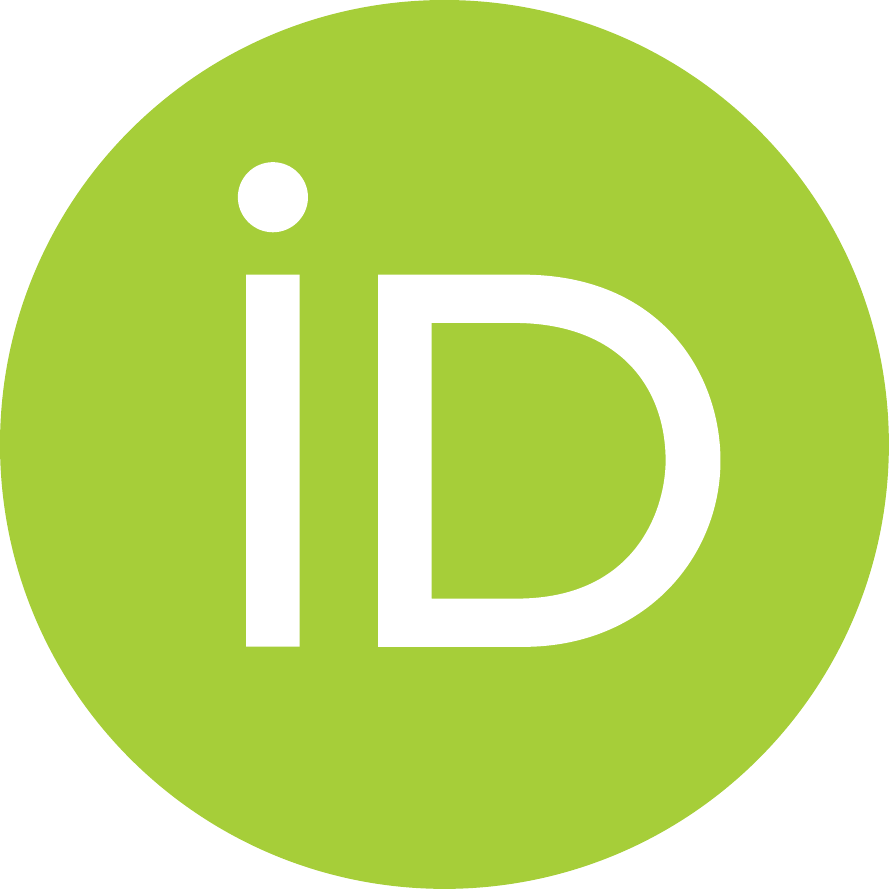}}} \href{https://orcid.org/#1}{https://orcid.org/#1}}
\begin{document}

\title{A Generalization of the Riccati Recursion for Equality-Constrained Linear Quadratic Optimal Control}

\author[]{Lander Vanroye}
\author[]{Joris De Schutter}
\author[]{Wilm Decré}
\authormark{Vanroye \textsc{et al.}}

\address[]{\orgdiv{Department of Mechanical Engineering}, \orgname{KU Leuven}, \orgaddress{\country{Belgium}}}
\address[]{\orgdiv{Core Lab ROB}, \orgname{Flanders Make@KU Leuven}, \orgaddress{\country{Belgium}}}

\corres{Lander Vanroye, KU Leuven, Department of Mechanical Engineering, Celestijnenlaan 300 box 2420, 3001 Heverlee, Belgium \email{lander.vanroye@kuleuven.be}}
\fundingInfo{European Research Council (ERC) under the European Union's Horizon 2020 research and innovation programme (Grant agreement: ROBOTGENSKILL No. 788298) and FWO project G0D1119N of the Research Foundation - Flanders (FWO - Flanders)}

\abstract[Summary]{
        This paper introduces a generalization of the well-known Riccati recursion for solving the discrete-time equality-constrained linear quadratic optimal control problem.
        The recursion can be used to compute problem solutions as well as optimal feedback control policies.
        Unlike other tailored approaches for this problem class, the proposed method does not require restrictive regularity conditions on the problem.
        This allows its use in nonlinear optimal control problem solvers that use exact Lagrangian Hessian information.
        We demonstrate that our approach can be implemented in a highly efficient algorithm that scales linearly with the horizon length.
        Numerical tests show a significant speed-up of about one order of magnitude with respect to state-of-the-art general-purpose sparse linear solvers.
        Based on the proposed approach, faster nonlinear optimal control problem solvers can be developed that are suitable for more complex applications or for implementations on low-cost or low-power computational platforms.
        The implementation of the proposed algorithm is made available as open-source software.
}
\keywords{constrained optimal control, Riccati recursion, trajectory optimization, nonlinear optimal control algorithms}

\maketitle

\section{Introduction}
The equality-constrained linear quadratic optimal control problem, or constrained LQ problem for short, is a generalization of the classical LQ problem \cite{rawlings2017model} that supports affine stagewise mixed input-state equality constraints.
Here, stagewise means that the constraints only relate inputs and states of a specific stage, or time step, of the control horizon.
An important type of these constraints are initial and terminal stage constraints, resulting in a two-point boundary value optimization problem.
Constrained LQ problems are encountered in linear quadratic optimal control applications but also as a subproblem in many nonlinear optimal control problem solvers.
In these nonlinear optimization solvers, at each iteration, the problem is approximated by a constrained LQ problem, resulting in a series of constrained LQ subproblems to be solved.
Two classes of nonlinear optimal control solver algorithms can be distinguished.
The first class are algorithms based on optimal feedback control policies. This class includes Differential Dynamic Programming (DDP) and the related iterative Linear Quadratic Regulator (iLQR).
Given a local linear approximation of system dynamics and a linear quadratic approximation of the problem Lagrangian, these algorithms compute an optimal feedback control policy at each stage in the backward pass.
These calculated feedback control policies are then used to compute the next iterate in the forward pass.
This process is repeated until a local minimum of the cost function is found.
Iterative LQR is a DDP-variant that uses a Gauss-Newton approximation of the Lagrangian Hessian.
Hence, it can be seen as if the second-order information of the dynamics and stagewise equality constraints are neglected \cite{giftthaler2018family}.
The second class of algorithms are Newton-type algorithms. 
These algorithms directly optimize for the open-loop trajectories of the linearized problem model to compute the next iterate.
These algorithms include Direct Single Shooting and Direct Multiple Shooting.
The latter is known to have superior convergence properties over the former \cite{albersmeyer2010lifted} and is able to be initialized from dynamically infeasible estimates of the solution.
Stagewise inequality constraints have been successfully incorporated by using penalty methods and barrier (interior point) methods that preserve the unconstrained LQ problem structure \cite{rao1998mpc_interiorpoint}.
This means that the classical Riccati recursion can be used without adaptation.
To date one of the main shortcomings of nonlinear optimal control problem solvers is the treatment of stagewise equality constraints.
Penalty methods have been used as well for equality constraints, but these methods are inexact or require more iterations than imposing the constraints directly.
Efficiently solving the constrained LQ problem is of particular importance because it often appears as the most time-consuming step of nonlinear optimal control solver algorithms.
Since the constrained LQ problem is a special case of an equality-constrained quadratic programming (QP) problem,
its solution can be found using the problem's Karush-Kuhn-Tucker (KKT) optimality conditions.
These result in a symmetric and indefinite linear system that can be solved using general-purpose linear solvers.
Dense linear solvers do not exploit the particular stagewise structure of the problem, and their computational complexity grows cubically with the control horizon length.
Sparse linear solvers, on the contrary, exploit the problem structure and scale linearly with the horizon length.
While general-purpose sparse linear solvers do exploit the block-sparse structure of the considered linear system, they do not allow computationally efficient implementations.

Tailored methods for solving the constrained LQ problem have been developed in prior work.
Domahidi et al. developed a structure-exploiting range space method that is able to solve a slightly more general problem formulation than the constrained LQ problem \cite{domahidi2012efficient}.
Sideris \& Rodriguez proposed a factorization scheme that can cope with stage-wise input-state equality constraints by eliminating the corresponding dual variables \cite{sideris2010riccati}.
Giffthaler \& Buchli showed that projecting the input variables onto the space of inputs that are admissible  with respect to the stagewise equality constraints results in a possibly singular optimal control problem. This optimal control problem can then be solved by a modified version of the classical Riccati recursion \cite{giftthaler2018family}.
Laine \& Tomlin applied a scheme where the nullspace of the input part of the stagewise equality constraints is used to modify the Riccati recurison \cite{laine2018efficient}.
A shortcoming of all these approaches is that they impose restrictive regularity conditions on the problem, such as a positive definite Lagrangian Hessian.
These regularity conditions inhibit their use in many applications, for example in nonlinear optimization algorithms make use of exact Lagrangian Hessian information.
In Section \ref{sec:discussion} we compare our proposed approach to these approaches in detail.

The salient features of our recursion and the contributions of this paper are fourfold:
\textit{First, we present a generalization of the Riccati recursion that allows stagewise equality constraints.}
This recursive factorization scheme allows for efficient implementations for both optimal feedback control policy and optimal solution computation.
\textit{Second, we avoid imposing restrictive regularity conditions on the problem.} For example, unlike several other methods  \cite{sideris2010riccati,giftthaler2017projection,domahidi2012efficient} we do not impose positive definiteness of the full-space Hessian,
such that our approach is not restricted to Lagrangian Hessian approximations that are guaranteed to be positive definite, and is hence much more flexible in the choice of using exact or approximate Lagrangian Hessian information, in a nonlinear programming algorithm context.
\textit{Third, our recursion detects ill-posed problems, i.e. problems that do not have a unique minimizer, without additional computational cost.}
In line-search nonlinear programming algorithms it is common to detect such ill-posed (sub)problems because they do not guarantee a descent direction with respect to the current iterate.
State-of-the-art nonlinear programming solvers such as \pkg{Ipopt} \cite{wachter2006implementation} modify the full-space Lagrangian Hessian with an inertia correction while \pkg{Knitro} \cite{knitro} switches to a trust-region step in such case.
\textit{Fourth, we present the recursion step by step from a linear algebra perspective, with a detailed mathematical analysis and discussion.}
Instead of approaching the problem from a dynamic programming perspective, we look at it as a stagewise factorization of the full KKT system.
This allows us to elegantly prove the claims we make for the second and third contribution using only linear algebra.

The structure of this paper is as follows.
First, Section \ref{sec:prere} discusses the notation and some prerequisites that are necessary throughout the rest of the paper.
Then, Section \ref{sec:probdef} defines the considered constrained LQ problem class.
Subsequently, Section \ref{sec:recursion} gives an overview of the generalized Riccati recursion in a step-by-step fashion.
Section \ref{sec:implementation} discusses some practical implementation aspects and provides the full algorithm for solving the constrained LQ problem.
Section \ref{Sec:numres} compares the performance and accuracy of the proposed method with three general-purpose sparse linear solvers on a set of randomly generated problems and a real-world quadrotor problem.
Section \ref{sec:discussion} compares the proposed recursion with related work, how the approach can also be used to derive optimal feedback control policies and the method's limitations.
Finally, Section \ref{sec:conclusions} provides the conclusions and outlook.
\pagebreak

\section{Notation and background} \label{sec:prere}
\subsection{Notation}
Table \ref{table:notation} summarizes the notation used throughout the paper.
\begin{table}[h]
        \centering
        \begin{tabular}{l|l}
                $\realvectors{n}$                & set of real column vectors of size $n$                             \\
                $\realmatrices{m}{n}$            & set of $m \times n$ real matrices                                  \\
                $\symmetricmatrices{n}$          & set of $n \times n$ real symmetric matrices                        \\
                $\posdefsymmetricmatrices{n}$    & set of $n \times n$ positive definite real symmetric matrices      \\
                $\possemdefsymmetricmatrices{n}$ & set of $n \times n$ positive semi-definite real symmetric matrices \\
                $\permutationmatrices{n}$        & set of $n \times n$ permutation matrices                           \\
                $\orthogonalmatrices{n}$         & set of $n \times n$ orthogonal matrices                            \\
                $\lowertriangularmatrices{n}$    & set of $n \times n$ real lower triangular matrices                 \\
                $\uppertriangularmatrices{n}$    & set of $n \times n$ real upper triangular matrices                 \\
                $\posdefdiagonalmatrices{n}$     & set of $n \times n$ positive definite real diagonal matrices       \\
                $\identm{n}$                     & $n \times n$ identity matrix                                       \\
                $\zerom{m \times n}$             & $m \times n$ zero matrix                                           \\
                $A \transpose$                   & transpose of $A$                                                   \\
        \end{tabular}
        \caption{\label{table:notation} notation used throughout the paper}
\end{table}

\subsection{KKT system and reduced Hessian} \label{sec:redhess}
Consider the equality-constrained quadratic programming (QP) problem:
\begin{subequations} \label{eq:generalqp}
        \begin{align}
                \minimize_{\substack{\mathbf{x}}} \quad & \frac{1}{2}\mathbf{x}\transpose H \mathbf{x} + h\transpose \mathbf{x} \\
                \textrm{subject to} \quad      & A\mathbf{x} + \mathbf{a} = \mathbf{0}  ,
        \end{align}
\end{subequations}
with primal optimization variables $x \in \realvectors{n}$, Hessian $H \in \symmetricmatrices{n}$, and constraint Jacobian $A \in \realmatrices{m}{n}$, with $m \leq n$.
The first-order necessary conditions for $\mathbf{x}^*$ to be a solution of \eqref{eq:generalqp}, state that there is a vector $\mathbf{\lambda}^* \in \realvectors{m}$ such that:
\begin{equation} \label{eq:kdef}
        \underbrace{\begin{bmatrix} H & A\transpose \\ A & \end{bmatrix}}_{\mathcal{K}} \begin{bmatrix} \mathbf{x}^* \\ \mathbf{\lambda}^* \end{bmatrix} = \begin{bmatrix}- \mathbf{h} \\- \mathbf{a} \end{bmatrix},
\end{equation}
in which $\mathcal{K}$ is referred to as the Karush-Kuhn-Tucker (KKT) matrix of problem \eqref{eq:generalqp} \cite{nocedal2006numerical}. 
We refer to the set of equations from the top and bottom block row as, respectively, the stationarity and the constraint equations.
In the remainder of the paper we use the following more compact notation for systems of equations like \eqref{eq:kdef}:
\begin{equation}
 \begin{bNiceArray}{cc|c}[margin,parallelize-diags=false, first-row]
                x & \lambda     &   \\
                H & A\transpose & \mathbf{h} \\
                A &             & \mathbf{a} \\
        \end{bNiceArray}
        ,
\end{equation}
in which we omit the asterisks ($*$).
\begin{definition}[Reduced Hessian of a KKT matrix] \label{def:reducedhess}
        A reduced Hessian $R$ is equal to the Hessian $H$, projected on the nullspace of
        the constraint Jacobian $A$: \[ R = Z\transpose H Z ,\] where $Z$ is defined as a matrix whose columns span the nullspace of
        $A$. 
        In case the nullspace of $A$ is empty, the reduced Hessian is defined as an empty matrix.
\end{definition}
Note that in the definition above, we speak about a (and not the) reduced Hessian, since reduced Hessians are nonunique as they depend on the choice of the nullspace basis used in $Z$.
In the remainder of this paper we will omit this detail and speak about the reduced Hessian because it is less confusing and common practice in optimization textbooks.
\begin{theorem}[Existence of a unique minimizer]
        Assuming full row rank of the constraint Jacobian,
        problem \eqref{eq:generalqp} has a unique minimizer if and only if the reduced Hessian of its KKT matrix is positive definite \cite{nocedal2006numerical}.
\end{theorem}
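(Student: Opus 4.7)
The plan is to reduce the constrained problem to an equivalent unconstrained quadratic program on the nullspace of $A$, where the standard characterization of unique minimizers of quadratic functions applies directly.

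First, since $A$ has full row rank, the feasible set $\{x : Ax + \mathbf{a} = 0\}$ is nonempty and can be parametrized. I would fix a particular solution $\mathbf{x}_p$ of $A\mathbf{x}_p = -\mathbf{a}$ and a matrix $Z \in \realmatrices{n}{n-m}$ whose columns form a basis of $\ker A$. Every feasible $\mathbf{x}$ can then be written uniquely as $\mathbf{x} = \mathbf{x}_p + Z \mathbf{y}$ with $\mathbf{y} \in \realvectors{n-m}$, since $Z$ has full column rank. In particular, the map $\mathbf{y} \mapsto \mathbf{x}_p + Z\mathbf{y}$ is a bijection between $\realvectors{n-m}$ and the feasible set, so uniqueness of a minimizer in $\mathbf{x}$ is equivalent to uniqueness of a minimizer in $\mathbf{y}$.

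Second, I would substitute this parametrization into the objective of \eqref{eq:generalqp}. Discarding the constant term (which does not affect the minimizer), the objective as a function of $\mathbf{y}$ becomes
\begin{equation*}
        f(\mathbf{y}) = \tfrac{1}{2}\, \tp{\mathbf{y}} R \, \mathbf{y} + \tp{\mathbf{c}} \mathbf{y}, \qquad R = \tp{Z} H Z, \qquad \mathbf{c} = \tp{Z}(H \mathbf{x}_p + \mathbf{h}),
\end{equation*}
where $R$ is the reduced Hessian in the sense of Definition \ref{def:reducedhess}. The theorem then reduces to the classical statement that the unconstrained quadratic $f$ has a unique minimizer if and only if $R$ is positive definite.

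Third, I would prove this standard equivalence by a short case analysis. If $R \succ 0$, then $\nabla f(\mathbf{y}) = R \mathbf{y} + \mathbf{c} = 0$ has the unique solution $\mathbf{y}^{*} = -\inv{R}\mathbf{c}$, and strict convexity makes it the unique global minimizer. Conversely, if $R$ is not positive definite, I would split into two subcases: if $R$ has a negative eigenvalue with eigenvector $\mathbf{v}$, then $f(t\mathbf{v}) \to -\infty$ as $|t| \to \infty$, so no minimizer exists; if $R \succeq 0$ but singular, pick any $\mathbf{v} \in \ker R \setminus \{0\}$ and observe that $f(\mathbf{y} + t\mathbf{v}) = f(\mathbf{y}) + t\, \tp{\mathbf{c}}\mathbf{v}$, which is either unbounded below (when $\tp{\mathbf{c}}\mathbf{v} \ne 0$) or yields an entire affine line of minimizers (when $\tp{\mathbf{c}}\mathbf{v} = 0$). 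In either subcase a unique minimizer fails to exist.

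The main obstacle is really only a bookkeeping one: ensuring that the equivalence is not affected by the non-uniqueness of $Z$ flagged after Definition \ref{def:reducedhess}. This is immediate, however, because any two nullspace bases are related by a nonsingular linear transformation $Z_2 = Z_1 M$, which gives $\tp{Z_2} H Z_2 = \tp{M}(\tp{Z_1} H Z_1) M$, a congruence transformation that preserves positive definiteness.
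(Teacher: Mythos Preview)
Your argument is correct and is the standard nullspace-reduction proof. Note, however, that the paper does not actually supply a proof of this theorem: it is stated as a known result with a citation to Nocedal and Wright, so there is no ``paper's own proof'' to compare against. Your approach is essentially the one found in that reference.

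One very minor phrasing point: in the positive-semidefinite singular subcase with $\tp{\mathbf{c}}\mathbf{v} = 0$, writing ``yields an entire affine line of minimizers'' tacitly assumes the infimum is attained, which need not hold if some other nullspace direction has nonzero pairing with $\mathbf{c}$. The logic is unaffected, though: if a minimizer exists then $\mathbf{y}^* + t\mathbf{v}$ is also a minimizer for all $t$, and if none exists the conclusion is immediate, so a \emph{unique} minimizer fails either way.
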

\subsection{Decomposition}\label{sec:decomp}
\begin{theorem}
        Any real matrix $A \in \mathbb{R}^{m\times n}$, with $m$ and $n$ nonzero and $\rho$ the rank of $A$, can be decomposed as:
\end{theorem}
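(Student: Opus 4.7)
The strategy is to produce such a decomposition in two orthogonal stages, so that the rank $\rho$ is revealed as the size of a nonsingular triangular block sandwiched between orthogonal (and/or permutation) factors. First I would apply a rank-revealing QR factorization with column pivoting. Since $A$ has rank $\rho$, one can select a permutation $P \in \permutationmatrices{n}$ that brings $\rho$ linearly independent columns to the front; a Householder (Businger--Golub) reduction then gives
\[
A P = Q \begin{bmatrix} R_{11} & R_{12} \\ 0 & 0 \end{bmatrix},
\]
with $Q \in \orthogonalmatrices{m}$ and $R_{11} \in \uppertriangularmatrices{\rho}$ nonsingular. Existence of $P$ is immediate from the definition of rank; nonsingularity of $R_{11}$ follows because the first $\rho$ pivots chosen under column pivoting are strictly nonzero, and the bottom block rows must vanish because Householder transformations preserve rank, so no more than $\rho$ nonzero rows can remain.

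Second, I would collapse the trailing block $R_{12}$ by an orthogonal transformation applied from the right. Constructing a sequence of Householder reflections acting on the rows of $\begin{bmatrix} R_{11} & R_{12} \end{bmatrix}$ yields an orthogonal $W \in \orthogonalmatrices{n}$ such that $\begin{bmatrix} R_{11} & R_{12} \end{bmatrix} W = \begin{bmatrix} T & 0 \end{bmatrix}$, where $T$ is square, triangular and nonsingular of size $\rho \times \rho$ (nonsingularity is inherited from $R_{11}$ because $W$ is orthogonal). Setting $V = P W \in \orthogonalmatrices{n}$ then gives the final factorization
\[
A = Q \begin{bmatrix} T & 0 \\ 0 & 0 \end{bmatrix} V\transpose,
\]
in which the leading columns of $V$ span the row space of $A$ and the trailing columns span its nullspace --- exactly the structure that will be exploited later in the recursion.

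The main obstacle, modest as it is, is the rank-revealing step: one must argue that column pivoting guarantees nonzero pivots throughout the leading $\rho$ steps and that the remaining $m-\rho$ rows of the triangular factor are forced to be zero. Both facts rest on the rank invariance of $A$ under orthogonal left multiplication and permutation right multiplication, together with the fact that any additional nonzero pivot would contradict $\operatorname{rank}(A) = \rho$. The right-sided reduction of $R_{12}$ in the second stage is then a routine Householder sweep, and assembling $Q$, $T$ and $V$ into the stated form is immediate. Depending on the exact form the theorem prescribes (e.g.\ a two-sided orthogonal, a permuted LQ/QR, or a block LU with triangular factors), essentially the same two-stage scheme applies, with the final cosmetic step being the grouping of factors to match the required signature.
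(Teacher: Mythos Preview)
Your argument is correct. What you have produced is precisely the complete orthogonal decomposition of $A$: a column-pivoted QR followed by a right-sided Householder sweep to annihilate $R_{12}$, yielding $A = Q\begin{bmatrix} T & 0 \\ 0 & 0\end{bmatrix}V\transpose$ with $Q,V$ orthogonal and $T$ nonsingular. The only step you leave implicit is the final absorption of the nonsingular block $T$ into one of the outer factors so that the middle matrix becomes exactly $\begin{bmatrix}-\identm{\rho} & \\ & \zerom{}\end{bmatrix}$; concretely, $T_L = Q\begin{bmatrix}-T & \\ & \identm{m-\rho}\end{bmatrix}$ and $T_R = V\transpose$ do the job, and both are invertible. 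Your ``cosmetic step'' remark covers this, but it is worth writing out once.

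The paper's proof is also a proof by construction but is deliberately agnostic about the factorization used: it exhibits the required $T_L,T_R$ first from the SVD (absorbing the nonzero singular values into the left factor) and then from an LU decomposition with complete pivoting (absorbing the nonsingular triangular blocks into the outer factors), and finally mentions QR with column pivoting and the complete orthogonal decomposition --- your route --- as further alternatives without spelling them out. So your proof sits squarely within the family of constructions the paper has in mind; you have simply chosen to detail the one the paper only names. The practical trade-offs are the usual ones: the SVD is the most numerically reliable rank revealer but the most expensive; LU with complete pivoting is the cheapest and is in fact what the paper's implementation uses; your COD route is a reasonable middle ground, offering orthogonal outer factors at roughly QR cost.
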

\begin{equation}\label{eq:decomp}
        A = \N_L \begin{bmatrix}-\identm{\rho} &  \\  & \zerom{(m-\rho) \times(n- \rho)}\end{bmatrix} \N_R,
\end{equation}
with $\N_L \in \mathbb{R}^{m\times m}$ and $\N_R \in \mathbb{R}^{n\times n}$ invertible matrices.
\begin{proof}
        Proof by construction.
        There are several options for constructing such decomposition:
        \begin{itemize}
                \item{\textbf{Singular Value Decomposition (SVD)} \cite{matrixcomputations}}

                Performing an SVD of $A$ yields:
                \begin{equation*}
                        A = \underbrace{\begin{bmatrix} U_P & U_N \end{bmatrix}}_{U} \begin{bmatrix} \Sigma & \\& \zerom{(m-\rho) \times (n-\rho)}\end{bmatrix} V\transpose
                \end{equation*}
                with $U \in \orthogonalmatrices{m}$, $V \in \orthogonalmatrices{n}$ and $\Sigma \in \posdefdiagonalmatrices{\rho}$.
                We can obtain a decomposition of the form \eqref{eq:decomp} by choosing $\N_L$ and $\N_R$ as:
                \\
                \begin{equation*}
                        A = \underbrace{\begin{bmatrix} -U_P\Sigma & U_N \end{bmatrix}}_{\N_L} \begin{bmatrix} -\identm{\rho} &  \\ & \zerom{(m-\rho) \times (n-\rho)}\end{bmatrix}
                        \underbrace{V\transpose}_{\N_R}.
                \end{equation*}

                \item{\textbf{LU decomposition with complete pivoting} \cite{matrixcomputations}}

                Performing an LU decomposition of $A$ yields:
                \begin{equation*}
                        {A = P_L\transpose \underbrace{\begin{bmatrix} L_1 &  \\ L_2  & \identm{\scriptscriptstyle m-\rho} \end{bmatrix}}_{L}  \underbrace{\begin{bmatrix} U_1  & U_2 \\ \zerom{\scriptscriptstyle(m-\rho)\times \rho} & \zerom{\scriptscriptstyle (m-\rho)\times(n-\rho)} \end{bmatrix}}_{U} P_R},
                \end{equation*}
                with $P_L \in \permutationmatrices{m}$,
                $P_R \in \permutationmatrices{n}$ permutation matrices,
                $L_1 \in \lowertriangularmatrices{\rho}$, an invertible lower triangular matrix, $U_1 \in \uppertriangularmatrices{\rho}$ an invertible upper triangular matrix,
                $L_2 \in \realmatrices{(m-\rho)}{\rho}$ and $U_2 \in \realmatrices{\rho}{(n-\rho)}$.
                The LU decomposition can be used to factor $A$ as:
                \\
                \begin{equation*}
                        {A = \underbrace{P_L\transpose L\begin{bmatrix} -U_1  &  \\   & \identm{\scriptscriptstyle m-\rho} \end{bmatrix}}_{\N_L}  \begin{bmatrix}  -\identm{\rho} & \\ & \zerom{(m-\rho)\times(n-\rho)} \end{bmatrix}\underbrace{\begin{bmatrix}  \identm{\rho} & { U_1^{-1}U_2} \\  & \identm{n-\rho} \end{bmatrix} P_R}_{\N_R}.}
                \end{equation*}

                \item{\textbf{Other decompositions}}

                Other decompositions that can be used include QR with column pivoting and the Complete Orthogonal decomposition.
                A discussion on these decompositions can be found in Golub et al. \cite{matrixcomputations}.
        \end{itemize}
\end{proof}

In the remainder of this paper, if $m=0$ or $n=0$, we define $T_L = \identm{m}$ and $T_R = \identm{n}$, with $\identm{0}$ empty.
Furthermore, we omit all linear algebra operations involving empty matrices and we define the product of two matrices $A \in \mathbb{R}^{m \times 0}$, and $B \in \mathbb{R}^{0 \times n}$ as $AB = \zerom{m\times n}$.

\section{Problem definition}\label{sec:probdef}
This paper considers the constrained LQ problem written in the following standard form:
\begin{subequations}
        \begin{align}
                \minimize_{\substack{\mathbf{x}_k, \mathbf{u}_k,\mathbf{x}_K}} \quad & l_K(\mathbf{x}_K) + \sum_{k=0}^{K-1} l_k(\mathbf{u}_k, \mathbf{x}_k) \label{eq:LQOCP-objective}         \\
                \textrm{subject to} \quad                 & \mathbf{x}_{k+1} = B_k \mathbf{u}_k +A_k \mathbf{x}_k +  \mathbf{b}_k \label{eq:LQOCP-dynamicsconstraints}       \\
                                                          & G_K \mathbf{x}_{K}  =  - \mathbf{g}_K   \label{eq:LQOCP-endconstraint}                         \\
                                                          & G_{k,u} \mathbf{u}_k +G_{k,x}\mathbf{x}_k   = - \mathbf{g}_k         \label{eq:LQOCP-stageconstraints},
        \end{align}
        \label{eq:LQOCP}
\end{subequations}
for $k=0,\ldots,K-1$ in \eqref{eq:LQOCP-dynamicsconstraints} and \eqref{eq:LQOCP-stageconstraints}, and with $l_K(\mathbf{x}_K)$ and $l_k(\mathbf{u}_k, \mathbf{x}_k)$ defined as:
\begin{align*}
         & l_K(x_K) =  \frac{1}{2} \mathbf{x}_K\transpose Q_K \mathbf{x}_K + \mathbf{q}_K\transpose \mathbf{x}_K                                                                                                                                                                                                                  \\
         & l_k(\mathbf{u}_k, \mathbf{x}_k) = \frac{1}{2} \begin{bmatrix} \mathbf{u}_k \\ \mathbf{x}_k \end{bmatrix}\transpose \begin{bmatrix} R_k & S_k\transpose \\ S_k & Q_k\end{bmatrix}\begin{bmatrix} \mathbf{u}_k \\ \mathbf{x}_k \end{bmatrix} + \begin{bmatrix} \mathbf{r}_k \\ \mathbf{q}_k \end{bmatrix}\transpose\begin{bmatrix} \mathbf{u}_k \\ \mathbf{x}_k \end{bmatrix},
\end{align*}
with state vector $\mathbf{x}_k \in \realnumbers^{n_x}$,
input vector $\mathbf{u}_k \in \realnumbers^{n_u}$,
$R_k \in \symmetricmatrices{n_u}$, $S_k \in \realmatrices{n_x}{n_u}$, $Q_k \in \symmetricmatrices{n_x}$ and $K$ the horizon length, or number of control intervals.
The problem has stagewise quadratic objective function terms $l_k$,
and affine system dynamics \eqref{eq:LQOCP-dynamicsconstraints}, terminal-state \eqref{eq:LQOCP-endconstraint} and stagewise mixed state-input constraints \eqref{eq:LQOCP-stageconstraints}.
We refer to Section \ref{sec:recursion}, Equation \eqref{eq:originalKKT} for the KKT system structure of this problem.
Apart from linear independence of the constraints \eqref{eq:LQOCP-dynamicsconstraints}-\eqref{eq:LQOCP-stageconstraints},
the only assumption that is made on the problem's regularity,  is that it is well-posed, meaning that it has a unique minimizer.
As explained in Section \ref{sec:redhess}, this is equivalent to the requirement that the reduced Hessian of the KKT matrix of the problem is positive definite.
\section{Approach} \label{sec:recursion}
In this section the recursive scheme for solving the KKT system of the constrained LQ problem is outlined. For notational brevity, we consider $K=2$ in the main text, but in the algorithm environments we always give the general case (i.e. for any $K$). 
The KKT system of problem \eqref{eq:LQOCP} is given by:

\begin{equation}
        \label{eq:originalKKT}
 \begin{bNiceArray}{cccccccccc|c}[margin,parallelize-diags=false, first-row]
                \mathbf{x}_2        & \mathbf{v}_2           & \boldsymbol{\pi}_2         & \mathbf{u}_1     & \mathbf{x}_1           & \boldsymbol{\lambda}_1         & \boldsymbol{\pi}_1         & \mathbf{u}_0     & \mathbf{x}_0           & \boldsymbol{\lambda}_0         &     \\
                Q_2        & G_2\transpose & -\identm{}    &         &               &                   &               &         &               &                   & \mathbf{q}_2 \\
                G_2        &               &               &         &               &                   &               &         &               &                   & \mathbf{g}_2 \\
                -\identm{} &               &               & B_1     & A_1           &                   &               &         &               &                   & \mathbf{b}_1 \\
                           &               & B_1\transpose & R_1     & S_1\transpose & G_{1,u}\transpose &               &         &               &                   & \mathbf{r}_1 \\
                           &               & A_1\transpose & S_1     & Q_1           & G_{1,x}\transpose & -\identm{}    &         &               &                   & \mathbf{q}_1 \\
                           &               &               & G_{1,u} & G_{1,x}       &                   &               &         &               &                   & \mathbf{g}_1 \\
                           &               &               &         & -\identm{}    &                   &               & B_0     & A_0           &                   & \mathbf{b}_0 \\
                           &               &               &         &               &                   & B_0\transpose & R_0     & S_0\transpose & G_{0,u}\transpose & \mathbf{r}_0 \\
                           &               &               &         &               &                   & A_0\transpose & S_0     & Q_0           & G_{0,x}\transpose & \mathbf{q}_0 \\
                           &               &               &         &               &                   &               & G_{0,u} & G_{0,x}       &                   & \mathbf{g}_0 \\
        \end{bNiceArray}
        .
\end{equation}
\\
Here, $\mathbf{v}_K$, $\boldsymbol{\lambda}_k$ and $\boldsymbol{\pi}_k$ are the dual optimization variables associated with
the terminal-state constraint \eqref{eq:LQOCP-endconstraint},
the mixed state-input constraints \eqref{eq:LQOCP-stageconstraints},
and the dynamics constraints \eqref{eq:LQOCP-dynamicsconstraints}, respectively.
We exploit the block-banded diagonal structure of the KKT matrix through a Riccati-inspired backward substitution followed by a forward substitution \cite{gianluca_efficient}.
At each step of the backward substitution, a subset of the primal and dual optimization variables are eliminated, starting from the terminal stage.
In the forward substitution the primal and dual variables are computed in the reverse order of the backward substitution, resulting in a bottom-up algorithm, starting from the initial stage. 

As described below, the order of eliminating variables in the backward substitution is chosen in such a way that the top left submatrix of the transformed KKT matrix at the end of the stagewise factorization has the same structure as the KKT matrix at the beginning.
As a result, the same steps can be applied recursively for the following, prior in time, stage.
This procedure can be repeated until the initial stage is reached, as described in the final paragraph of this section.

\noindent \textbf{Substitution of dynamics.}\label{sec:subsdyn}
Eliminate $\boldsymbol{\pi}_2$ and $\mathbf{x}_2$ from the first and third block row, respectively. Substitution of $\mathbf{x}_2 = B_1 \mathbf{u}_1 + A_1 \mathbf{x}_1 + \mathbf{b}_1$ and $\boldsymbol{\pi}_2 = Q_2 \mathbf{x}_2 +G_2\transpose \mathbf{v}_2 + \mathbf{q}_2 $ results in a transformed KKT system with top left matrix:
\begin{equation}\label{eq:KKT_after_subsdyn}
 \begin{bNiceArray}{ccc|c}[first-row]
                \mathbf{u}_1                & \mathbf{x}_1                      & \left(\mathbf{v}_2  \quad \boldsymbol{\lambda}_1 \right) &                \\
                \overline{R}_1     & \overline{S}_1\transpose & \overline{G}_{1,u}\transpose                               & \overline{\mathbf{r}}_1 \\
                \overline{S}_1     & \overline{Q}_1           & \overline{G}_{1,x}\transpose                               & \overline{\mathbf{q}}_1 \\
                \overline{G}_{1,u} & \overline{G}_{1,x}       &   \ddots                                                         & \overline{\mathbf{g}}_1 \\
\end{bNiceArray}.
\end{equation}
The matrix $\begin{bmatrix}\overline{G}_{k,u}&\overline{G}_{k,x} &\overline{\mathbf{g}}_k \end{bmatrix}$ is a concatenation of the mixed state-input constraint of stage $k$ (here $k=1$) and the state constraint on $\mathbf{x}_{k+1}$, after substitution of the dynamics, while
$\gamma_k$ is the number of equations represented by this matrix.

\noindent \textbf{Symmetric transformation.} 
Using the decomposition explained in Section \ref{sec:decomp}, decompose $\overline{G}_{1,u}$ as:
\begin{equation*} \label{eq:factormiddle}
        \overline{G}_{1,u}= \N_{1,L} \left[\begin{array}{cc} -\identm{\rho_1} &\\  &\zerom{(\gamma_1 - \rho_1) \times (n_u-\rho_1)}  \end{array}\right] \N_{1,R},
\end{equation*}
with $\rho_1$ the rank and $\gamma_1$ the number of rows of $\overline{G}_{1,u}$. 
Now define $\widetilde{\mathbf{u}}_{1,A}, \widetilde{\boldsymbol{\lambda}}_1 \in \mathbb{R}^{\rho_1}$,  and $\widetilde{\mathbf{u}}_{1,B}, \mathbf{v}_1 \in \mathbb{R}^{\gamma_1-\rho_1}$ and apply the following transformation of variables:
\begin{equation}
        \left\{
        \begin{array}{l}
                \mathbf{u}_1 = \N_{1,R}^{-1} \begin{pmatrix}
                                            \widetilde{\mathbf{u}}_{1,A} \\ \widetilde{\mathbf{u}}_{1,B}
                                    \end{pmatrix} \\
                \begin{pmatrix}
                        \mathbf{v}_2 \\ {\boldsymbol{\lambda}}_1
                \end{pmatrix}
                =\N_{1,L}\mintranspose
                \begin{pmatrix}
                        \widetilde{\boldsymbol{\lambda}}_1 \\ \mathbf{v}_1
                \end{pmatrix}
        \end{array}
        \right.,
\end{equation}
and left-multiply the first and the third block row of \eqref{eq:KKT_after_subsdyn} by $\N_{R,1}\mintranspose$ and $\N_{L,1}^{-1}$, respectively. The top left block of the KKT system now becomes:
\begin{equation}\label{eq:KKT_after_transformation}
 \begin{bNiceArray}{cccc|c}[margin,parallelize-diags=false, first-row, last-col]
                \left(\widetilde{\mathbf{u}}_{1,A} \quad \widetilde{\mathbf{u}}_{1,B} \right) & \mathbf{x}_1                       & \widetilde{\boldsymbol{\lambda}_1}                                      & \mathbf{v}_1           &                 &                                      \\
                \widetilde{R}_1                                                                    & \widetilde{S}_1\transpose & \begin{bmatrix} -\identm{\rho_1} \\ \zerom{} \end{bmatrix} &               & \widetilde{\mathbf{r}}_1 &                                      \\
                \widetilde{S}_1                                                                    & \widetilde{Q}_1           & \widetilde{G}_{1,x}\transpose                              & H_1\transpose & \widetilde{\mathbf{q}}_1 &                                      \\
                \begin{bmatrix}-\identm{\rho_1}& \zerom{}\end{bmatrix}                             & \widetilde{G}_{1,x}       &                                                            &               & \widetilde{\mathbf{g}}_1 & (\ref{eq:KKT_after_transformation}a) \\
                                                                                                   & H_1                       &                                                            &   \ddots            & {\mathbf{h}}_1           & (\ref{eq:KKT_after_transformation}b) \\
\end{bNiceArray}.
\end{equation}
Note that the stagewise constraints are recombined into two subsets.
The top subset (\ref{eq:KKT_after_transformation}a) can always be satisfied by choosing the appropriate input $\widetilde{\mathbf{u}}_{1,A}$.
The bottom subset (\ref{eq:KKT_after_transformation}b) represents a constraint that cannot be satisfied by choosing appropriate inputs at the current stage: it puts a constraint on the state $\mathbf{x}_1$.
It can be seen as if the transformation of this step transfers this subset of the constaints to the next, prior in time, stage.
The columns of $T_{1,R}^{-1}\begin{bmatrix} \zerom{(n_u -\rho_1)\times\rho_1} & I_{n_u - \rho_1} \end{bmatrix}\transpose$ form a basis for the nullspace of $\overline{G}_{1,u}$
and, hence, $\widetilde{\mathbf{u}}_{1,B}$ can be seen as a parameterization of the linear subspace of inputs that are admissible without violating the stagewise constraints.
The size of $\widetilde{\mathbf{u}}_{1,A}$ and $\widetilde{\mathbf{u}}_{1,B}$ is hence equal to the rank and nullity of $\overline{G}_{1,u}$, respectively.
We refer to Table \ref{table:dims} for the dimensions of all submatrices involved in the algorithm.

\noindent \textbf{Substitution of stagewise equality constraints.}
Eliminate $\widetilde{\mathbf{u}}_{1,A}$ and $\widetilde{\boldsymbol{\lambda}}_1$
by substituting $\widetilde{\mathbf{u}}_{1,A} = \widetilde{G}_{1,x} \mathbf{x}_1 + \widetilde{\mathbf{g}}_1$ and $\begin{pmatrix}\widetilde{\boldsymbol{\lambda}}_1\transpose & - \end{pmatrix}\transpose = \widetilde{R}_1 \begin{pmatrix}
                        \widetilde{\mathbf{u}}_{\scriptstyle {1,A}}\transpose & \widetilde{\mathbf{u}}_{1,B} \transpose
                \end{pmatrix}\transpose + \widetilde{S}_1\transpose \mathbf{x}_1 + \widetilde{\mathbf{r}}_1$, the KKT system is transformed to a system with top left matrix:
\begin{equation}\label{eq:KKT_hat}
 \begin{bNiceArray}{ccc|c}[margin,parallelize-diags=false, first-row]
                \widetilde{\mathbf{u}}_{1,B} & \mathbf{x}_1                     & \mathbf{v}_1           &               \\
                \widehat{R}_1       & \widehat{S}_1\transpose &               & \widehat{\mathbf{r}}_1 \\
                \widehat{S}_1       & \widehat{Q}_1           & H_1\transpose & \widehat{\mathbf{q}}_1 \\
                                    & H_1                     & \ddots        & \mathbf{h}_1           \\
\end{bNiceArray}.
\end{equation}
Note that decision variable $\widetilde{\mathbf{u}}_{1,B}$ only appears in the Hessian, and not in the constraint Jacobian of the (transformed) KKT system \eqref{eq:KKT_hat}.
This means that these variables are unconstrained. Since, essentially, we are still solving the original optimization problem \eqref{eq:LQOCP}
it is easy to see that, if the original problem has a unique solution, the optimization problem associated with KKT system \eqref{eq:KKT_hat} should have a unique solution as well.
This means that $\widehat{R}_1$ should be positive definite.
This fact is used in the next step. For a formal proof we refer to the appendix of this dissertation.

\noindent \textbf{Schur complement step.}
Eliminate $\widetilde{\mathbf{u}}_{1,B}$, by using the Schur complement of $\widehat{R}_1$, resulting in the system:

\begin{equation}\label{eq:KKT_P}
 \begin{bNiceArray}{cccccc|c}[margin,parallelize-diags=false, first-row]
                \mathbf{x}_1        & \mathbf{v}_1           & \boldsymbol{\pi}_1         & \mathbf{u}_0     & \mathbf{x}_0           & \boldsymbol{\lambda}_0         &     \\
                P_1        & H_1\transpose & -\identm{}    &         &               &                   & \mathbf{p}_1 \\
                H_1        &               &               &         &               &                   & \mathbf{h}_1 \\
                -\identm{} &               &               & B_0     & A_0           &                   & \mathbf{b}_0 \\
                           &               & B_0\transpose & R_0     & S_0\transpose & G_{0,u}\transpose & \mathbf{r}_0 \\
                           &               & A_1\transpose & S_0     & Q_0           & G_{0,x}\transpose & \mathbf{q}_0 \\
                           &               &               & G_{0,u} & G_{0,x}       &                   & \mathbf{g}_0 \\
        \end{bNiceArray}
        .
\end{equation}
Because $\widehat{R}_1$ is positive definite, the Cholesky decomposition $\widehat{R}_1 = \Lambda_1 \Lambda_1\transpose$, with $\Lambda_1 \in \mathbb{L}^{n_u-\rho}$ and invertible, can be used to efficiently calculate this Schur complement.
If this Cholesky decomposition fails, it means that the problem is ill-posed.
This closes the recursion, since the top left submatrix of the transformed KKT matrix now has the same form as the top left submatrix of the original KKT matrix.
This means that the previous steps can be applied recursively until all primal and dual optimization variables except for the initial stage are eliminated.

\noindent \textbf{Factorization of initial stage.}
When all primal optimization variables except for $x_0$ are eliminated, the KKT system is transformed into the system:
\begin{equation}
 \begin{bNiceArray}{cc|c}[margin,parallelize-diags=false, first-row]
                \mathbf{x}_0 & \mathbf{v}_0           &     \\
                P_0 & H_0\transpose & \mathbf{p}_0 \\
                H_0 &               & \mathbf{h}_0 \\
        \end{bNiceArray}.
\end{equation}
Now we factor the matrix $H_0$ into:
\begin{equation}\label{eq:factorinit} H_0 = \N_{I,L} \begin{bmatrix} -\identm{\rho_I} & \zerom{} \end{bmatrix} \N_{I,R} .\end{equation}
Note that $H_0$ has full row rank because linear independence of the constraints is assumed.
Hence, there are no zero rows in the middle matrix of the decomposition, and $\rho_I$ is equal to the number of rows of $H_0$: $\rho_I = \gamma_0 - \rho_0$.
The $I$-subscript indicates that the quantities are associated with the initial stage.
This decomposition gives rise to a transformation of variables:
\begin{equation}
        \left\{ \begin{array}{l}
                \mathbf{x}_0 = \N_{I,R}^{-1} \begin{pmatrix} \widetilde{\mathbf{x}}_{0,A} \\ \widetilde{\mathbf{x}}_{0,B} \end{pmatrix} \\
                \mathbf{v}_0 = \N_{I,L}\mintranspose\widetilde{\mathbf{v}}_0
        \end{array} \right. ,
\end{equation}
with $\mathbf{x}_{0,A} \in \mathbb{R}^{\rho_I}$ and $\mathbf{x}_{0,B} \in \mathbb{R}^{n_x - \rho_I}$.
After multiplying the first block row with $\N_{I,R}\mintranspose$ and the last block row with $\N_{I,L}^{-1}$, the system is transformed to:
\begin{equation}
        \begin{bNiceArray}{cc|c}[first-row]
                \left( \widetilde{\mathbf{x}}_{0, A} \quad \widetilde{\mathbf{x}}_{0, B} \right) & \widetilde{\mathbf{v}}_0         &       \\
                \widetilde{P}_I                                                                         & \begin{bmatrix}
                                                                                                                  -I_{\rho_I} \\ \zerom{}
                                                                                                          \end{bmatrix} & \widetilde{\mathbf{p}}_I \\
                \begin{bmatrix}
                        -I_{\rho_I} & \zerom{}
                \end{bmatrix}                                                                 &                         & \widetilde{\mathbf{h}}_I \\
        \end{bNiceArray}.
\end{equation}

Now eliminate $\widetilde{\mathbf{x}}_{0,A} =\widetilde{\mathbf{h}}_I$ and $\begin{pmatrix} \widetilde{\mathbf{v}}_0\transpose & \mathbf{-} \end{pmatrix}\transpose =  \widetilde{\mathbf{p}}_0 \begin{pmatrix} \widetilde{\mathbf{x}}_{0,A}\transpose & \widetilde{\mathbf{x}}_{0,B}\transpose\end{pmatrix}\transpose + \widetilde{P}_I$. This results in the system:
\begin{equation}
        \widehat{P}_I \widetilde{\mathbf{x}}_{I,B} = - \widehat{\mathbf{p}}_I.
\end{equation}
Following the same reasoning as before, $\widehat{P}_I$ has to be positive definite, so the system can be solved efficiently using the Cholesky decomposition of $\widehat{P}_I = \Lambda_I \Lambda_I\transpose$, with $\Lambda_I \in \mathbb{L}^{n_x-\rho_I}$ and invertible.
If this Cholesky decomposition would fail, it means that the problem is ill-posed.

\begin{table}[]
        \centering
        \begin{tabular}{c|cc}
                                                                             & rows                    & columns        \\ \hline
                $A_k$                                                        & $n_x$                   & $n_x$          \\
                $B_k$                                                        & $n_x$                   & $n_u$          \\
                $R_k$, $\overline{R}$, $\widetilde{R}_k$                     & $n_u$                   & $n_u$          \\
                $S_k$, $\overline{S}$, $\widetilde{S}_k$                     & $n_x$                   & $n_u$          \\
                $Q_k$, $\overline{Q}$, $\widetilde{Q}_k, \widehat{Q}_k, P_k$ & $n_x$                   & $n_x$          \\
                $\overline{G}_u$                                             & $\gamma_k$              & $n_u$          \\
                $\overline{G}_x$                                             & $\gamma_k$              & $n_x$          \\
                $\N_{k,L}$                                                   & $\gamma_k$              & $\gamma_k$     \\
                $\N_{k,R}$                                                   & $n_u$                   & $n_u$          \\
                $H_k$                                                        & $\gamma_{k} - \rho_{k}$ & $n_x$          \\
                $E_k$                                                        & $\gamma_k - \rho_{k}$ & $n_u - \rho_k$          \\
                $Z_k$                                                        & $n_u -\rho_k$ &  $\gamma_k - \rho_{k}$        \\
                $\widehat{R}_k, \Lambda_k$                                   & $n_u -\rho_k$           & $n_u - \rho_k$ \\
                $\widehat{S}_k$                                              & $n_x$                   & $n_u -\rho_k$  \\
                $\widetilde{G}_{k,x}$                                        & $\rho_k $     & $n_x$          \\
                $L_k$                                                        & $n_u -\rho_k$           & $n_x$          \\
                $\N_{I,L}$                                                   & $\gamma_0$              & $\gamma_0$     \\
                $\N_{I,R}$                                                   & $n_x$                   & $n_x$          \\
                $\widetilde{H}_I$                                            & $\gamma_0 - \rho_I$     & $n_x-\rho_I$   \\
                $\widetilde{P}_I$, $\widehat{P}_I, \Lambda_I$                & $n_x - \rho_I$          & $n_x-\rho_I$   \\
        \end{tabular}
        \caption{\label{table:dims} Dimensions of submatrices appearing in the algorithm}
\end{table}

\section{Implementation} \label{sec:implementation}
\noindent \textbf{Matrix decomposition of numerically rank-deficient matrices.} \label{sec:decomprd}
Numerical issues can occur in the symmetric transformation step of the backward substitution (Algorithm \ref{alg:fullalgo}, Line \ref{alg:linetransvar}).
In this situation the matrix decomposition, as explained in  Section \ref{sec:decomp}, can cause problems if $\overline{G}_{u}$ is numerically rank-deficient.
This can occur even if the constraint Jacobian of the full KKT system has full rank. 
For clarity, we repeat the decomposition \eqref{eq:decomp}, applied to $\overline{G}_{u}$, at stage $k$ in the backward substitution:
\begin{equation} \label{eq:decomp1}
        \overline{G}_u = \N_{k,L} \begin{bmatrix}-\identm{\rho_k} &  \\  & \zerom{(m-\rho_k) \times(n- \rho_k)}\end{bmatrix} \N_{k, R}.
\end{equation}
When $\overline{G}_u$ is nearly rank-deficient, during the factorization a decomposition of the following form can be encountered:
\begin{equation} \label{eq:decomp2}
        \overline{G}_u = \N_{k,L} \begin{bmatrix}-\identm{\rho_k} &  \\  & E_{k, (m-\rho) \times(n- \rho)}\end{bmatrix} \N_{k,R},
\end{equation}
where $E_k$ represents a matrix consisting of very small values ($E_{i,j} < \epsilon$).
When the decomposition is obtained by use of an SVD factorization, $E$ corresponds to the diagonal matrix consisting of the singular values smaller than the threshold $\epsilon$.
In the case of an LU factorization with complete pivoting, this type of matrix decomposition can be found by early stopping the factorization when no pivot larger than $\epsilon$ can be found.
Rather than enforcing the decomposition of the form \eqref{eq:decomp1}, from a numerical point of view, it is better to continue with the decomposition of the form \eqref{eq:decomp2} to avoid using very small pivots.
The modifications of the algorithm due to this different decomposition are described below.
The top left KKT system \eqref{eq:KKT_after_transformation} changes to:
\begin{equation}\label{eq:KKT_after_transformation_modificiation}
 \begin{bNiceArray}{cccc|c}[margin,parallelize-diags=false, first-row]
                \left(\widetilde{\mathbf{u}}_{1,A} \; \widetilde{\mathbf{u}}_{1,B}\right) & \mathbf{x}_1                       & \widetilde{\boldsymbol{\lambda}_1}                                      & \mathbf{v}_1                                                                  \\
                \widetilde{R}_1                                                                    & \widetilde{S}_1\transpose & \begin{bmatrix} -\identm{\rho_1} \\ \zerom{} \end{bmatrix} & \begin{bmatrix} \zerom{} \\ E_1 \transpose \end{bmatrix}              & \widetilde{\mathbf{r}}_1                                       \\
                \widetilde{S}_1                                                                    & \widetilde{Q}_1           & \widetilde{G}_{1,x}\transpose                              & H_1\transpose & \widetilde{\mathbf{q}}_1                                       \\
                \begin{bmatrix}-\identm{\rho_1}& \zerom{}\end{bmatrix}                             & \widetilde{G}_{1,x}       &                                                            &               & \widetilde{\mathbf{g}}_1 \\
                \begin{bmatrix}\zerom{}& E_1\end{bmatrix}                            & H_1                       &                                                            & \ddots             & {\mathbf{h}}_1            \\
        \end{bNiceArray}.
\end{equation}
As before, substitute $\widetilde{\mathbf{u}}_{1,A} = \widetilde{G}_{1,x} \mathbf{x}_1 + \widetilde{\mathbf{g}}_1$ and $\begin{pmatrix}\widetilde{\boldsymbol{\lambda}}_1\transpose & - \end{pmatrix}\transpose = \widetilde{R}_1 \begin{pmatrix}
                        \widetilde{\mathbf{u}}_{1,A}\transpose & \widetilde{\mathbf{u}}_{1,B}
                \end{pmatrix}\transpose + \widetilde{S}_1\transpose \mathbf{x}_1 + \widetilde{\mathbf{r}}_1$,
                which results in a top left KKT system:
\begin{equation}\label{eq:KKT_hat_modificiation}
 \begin{bNiceArray}{ccc|c}[margin,parallelize-diags=false, first-row]
                \widetilde{\mathbf{u}}_{1,B} & \mathbf{x}_1                     & \mathbf{v}_1           &       \\
                \widehat{R}_1       & \widehat{S}_1\transpose &  E_1 \transpose     & \widehat{\mathbf{r}}_1 \\
                \widehat{S}_1       & \widehat{Q}_1           & H_1\transpose & \widehat{\mathbf{q}}_1 \\
                   E_1              & H_1                     &     \ddots          & \mathbf{h}_1           \\
 \end{bNiceArray}.
\end{equation}
Now, as before, eliminate $\widetilde{\mathbf{u}}_{1,B}$, by using the Schur complement of $\widehat{R}_1$, i.e. $\widetilde{u}_{1,B} = -\widehat{R}_1 ^{-1} \left(\widehat{S}_1 \transpose \mathbf{x}_1 + E_1 \transpose \mathbf{v}_1 + \widehat{\mathbf{r}}_1 \right)$, resulting in a system of the form:
\begin{equation}\label{eq:KKT_P_modificiation}
 \begin{bNiceArray}{cccccc|c}[margin,parallelize-diags=false, first-row]
                \mathbf{x}_1        & \mathbf{v}_1           & \boldsymbol{\pi}_1         & \mathbf{u}_0     & \mathbf{x}_0           & \boldsymbol{\lambda}_0         &     \\
                P_1        & H_1\transpose & -\identm{}    &         &               &                   & \mathbf{p}_1 \\
                H_1        &  -E_1 \hat{R_1} ^{-1} E_1 \transpose   &               &         &               &                   & \mathbf{h}_1 \\
                -\identm{} &               &               & B_0     & A_0           &                   & \mathbf{b}_0 \\
                           &               & B_0\transpose & R_0     & S_0\transpose & G_{0,u}\transpose & \mathbf{r}_0 \\
                           &               & A_1\transpose & S_0     & Q_0           & G_{0,x}\transpose & \mathbf{q}_0 \\
                           &               &               & G_{0,u} & G_{0,x}       &                   & \mathbf{g}_0 \\
 \end{bNiceArray}.
\end{equation}
Note that the form of this matrix only differs from \eqref{eq:KKT_P} by the presence of the entry  $-E_1\hat{R_1} ^{-1} E_1 \transpose$.
As the values of this entry are very small, it can be neglected in the numerical computations, and the computation continues as before.
The modifications, due to this different decomposition, are taken into account in the final backward and forward algorithm (Algorithm \ref{alg:fullalgo} and \ref{alg:forward}, respectively).

\noindent \textbf{Iterative refinement.} Iterative refinement is a common method for improving the accuracy of a linear system solution \cite{duff2004ma57, mumps}.
The algorithm improves the accuracy of the solution by solving a sequence of linear systems, where the right hand side of the system is the residual of the previous solution.
The residual is defined as the difference between the left hand side evaluated at the solution and the right hand side of the system ($\mathbf{r} = A\mathbf{x} + \mathbf{b}$).
Since the coefficient matrix of the involved linear systems is unchanged, the factorization of the coefficient matrix from the backward substitution can be reused.

\noindent \textbf{Full algorithm.}
Algorithm \ref{alg:fullalgo} outlines the full backward substitution algorithm for arbitrary horizon length $K$, while Algorithm \ref{alg:forward} outlines the forward substitution algorithm.
Note that some of the matrices appearing in these algorithms are structurally zero, identity or symmetric.
In our software implementation this structure is exploited in order to avoid unnecessary floating point operations and memory usage.
The quantities without a stage index $k$ are temporary variables, that are not required later in the algorithm.
\begin{algorithm}[ht] \caption{{Backward Substitution}} \label{alg:fullalgo}
        \normalsize
        \begin{algorithmic}[1]
                \State $\begin{bmatrix} H_K & \mathbf{h}_K  \end{bmatrix}  \gets \begin{bmatrix} G_K & \mathbf{g}_K \end{bmatrix}$
                \State $\begin{bmatrix} P_K & p_K  \end{bmatrix}  \gets \begin{bmatrix} Q_K & \mathbf{q}_K \end{bmatrix}$
                \For{\texttt{k = K-1, \dots, 0}}
                \State  ${\begin{bmatrix} \overline{R} & \overline{S}\transpose &\overline{\mathbf{r}} \\ \overline{S} & \overline{Q} & \overline{\mathbf{q}} \end{bmatrix}\gets
                                        \begin{bmatrix} \begin{matrix} B\transpose_k \\ A\transpose_k \end{matrix} & \identm{n_u+n_x} \end{bmatrix}\begin{bmatrix} P_{k+1} & & & \mathbf{p}_{k+1} \\ & R_k &S_k\transpose & \mathbf{r}_k \\ & S_k & Q_k & \mathbf{q}_k \end{bmatrix} \begin{bmatrix} \begin{matrix} B_k & A_k & \mathbf{b}_k \end{matrix}\\ \identm{n_u+n_x+1}  \end{bmatrix}}$
                \Comment substitution of dynamics
                \State  ${\begin{bmatrix} \overline{G}_u &\overline{G}_x  &\overline{\mathbf{g}} \end{bmatrix}
                                        \gets \begin{bmatrix} \begin{bmatrix} G_{k, u} & G_{k, x} & \mathbf{g}_k \end{bmatrix}  \\ \begin{bmatrix} H_{k+1} & \mathbf{h}_{k+1} \end{bmatrix} \begin{bmatrix} \begin{matrix} B_k & A_k\end{matrix} & \mathbf{b}_k \\  & 1  \end{bmatrix} \end{bmatrix}}$
                \State $\gamma_k \gets$ number of rows of $\overline{G}_u$
                \State  Calculate $\N_{k,L}, \; \N_{k,R},\; \rho_k$ and $E_k$ such that $\overline{G}_u$ can be decomposed as in Section \ref{sec:decomprd}. \label{alg:linetransvar}
                \Comment symmetric transformation 
                \State  ${\begin{bmatrix} \widetilde{R}_k & \widetilde{S}\transpose_k &\widetilde{\mathbf{r}}_k \\ \widetilde{S}_k & \widetilde{Q}_k & \widetilde{\mathbf{q}}_k \end{bmatrix}\gets \begin{bmatrix} \N_{k,R} \mintranspose& \\ & \identm{n_x} \end{bmatrix} \begin{bmatrix} \overline{R} & \overline{S}\transpose &\overline{\mathbf{r}} \\ \overline{S} & \overline{Q} & \overline{\mathbf{q}} \end{bmatrix}\begin{bmatrix} \N_{k,R}^{-1} & \\ & \identm{n_x+1} \end{bmatrix}}$
                \State ${\begin{bmatrix}\begin{matrix}\widetilde{G}_{k,x} & \widetilde{\mathbf{g}}_k \\  H_k & \mathbf{h}_k \end{matrix} \end{bmatrix} \gets \N_{k,L} ^{-1}  \begin{bmatrix}\overline{G}_x&\overline{\mathbf{g}} \end{bmatrix}}$
                \Comment substitution of equality constraints 
                \State  ${\begin{bmatrix} \widehat{R}_k & \widehat{S}_k\transpose &\widehat{\mathbf{r}}_k \\ \widehat{S}_k & \widehat{Q}_k & \widehat{\mathbf{q}}_k \end{bmatrix}\gets \begin{bmatrix} \begin{matrix} \zerom{} \\ \widetilde{G}_{k,x}\transpose \end{matrix} & \identm{n_u+n_x-\rho_k} \end{bmatrix} \begin{bmatrix} \widetilde{R}_k & \widetilde{S}\transpose_k &\widetilde{\mathbf{r}}_k \\ \widetilde{S}_k & \widetilde{Q}_k & \widetilde{\mathbf{q}}_k \end{bmatrix} \begin{bmatrix} \begin{matrix} \zerom{}& \widetilde{G}_{k,x} & \widetilde{\mathbf{g}}_{k}\end{matrix} \\ \identm{n_u+n_x-\rho_k+1}\end{bmatrix}}$
                \State $\Lambda_k \gets \mbox{chol}\left(\widehat{R}_k\right)$
                \Comment Schur complement step 
                \State $\begin{bmatrix} L_k & Z_k & \mathbf{l}_k \end{bmatrix} \gets \Lambda_k^{-1}\begin{bmatrix} \widehat{S}\transpose_k & E_k \transpose &\widehat{\mathbf{r}}_k\end{bmatrix} $
                \State $\begin{bmatrix} P_k & \mathbf{p}_k \end{bmatrix} \gets \begin{bmatrix} \widehat{Q}_k & \widehat{\mathbf{q}}_k \end{bmatrix} - L_k\transpose\begin{bmatrix} L_k & \mathbf{l}_k \end{bmatrix}$
                \State $\begin{bmatrix} H_k & \mathbf{h}_k \end{bmatrix} \gets \begin{bmatrix} H_k & \mathbf{h}_k \end{bmatrix} - Z_k \transpose\begin{bmatrix} L_k & \mathbf{l}_k  \end{bmatrix}$
                \EndFor
                \State  Calculate $\N_{I,L}, \; \N_{I,R},\; \rho_I$, such that $H_0$ can be decomposed as in Section \ref{sec:decomp}.
                \Comment factorization of initial stage 
                \State $\begin{bmatrix} \widetilde{P}_I & \widetilde{\mathbf{p}}_I \end{bmatrix} \gets \N_{I,R} \mintranspose\begin{bmatrix} P_0 & \mathbf{p}_0 \end{bmatrix} \begin{bmatrix} \N_{I,R} ^{-1} & \\ & 1 \end{bmatrix} $
                \State $\widetilde{h}_I\gets \N_{I,L}^{-1}  \mathbf{h}_0 $
                \State ${\begin{bmatrix} \widehat{P}_I & \widehat{\mathbf{p}}_I \end{bmatrix} \gets \begin{bmatrix} \zerom{} & \identm{n_x-\rho_I} \end{bmatrix} \begin{bmatrix} \widetilde{P}_I & \mathbf{p}_I\end{bmatrix} \begin{bmatrix} \begin{matrix} \zerom{\rho_I \times (n_x-\rho_I)}& \widetilde{\mathbf{h}}_I \end{matrix} \\ \identm{n_x-\rho_I+1} \end{bmatrix}}$
                \State $\Lambda_I \gets \mbox{chol}\left(\widehat{P}_I\right)$
        \end{algorithmic}
\end{algorithm}
\begin{algorithm}[ht] \caption{{Forward substitution}} \label{alg:forward}
        \begin{algorithmic}[1]
                \State $\widetilde{\mathbf{x}}_{I,B} \gets - \Lambda_I \mintranspose\left(\Lambda_I^{-1} \widehat{\mathbf{p}}_I\right)$
                \State $\widetilde{\mathbf{x}}_{I} \gets \begin{bmatrix} \widetilde{\mathbf{h}}_I \\ \widetilde{\mathbf{x}}_{I,B} \end{bmatrix}$
                \State $\mathbf{x}_0 \gets \N_{I,R}^{-1} \widetilde{\mathbf{x}}_I$
                \State $ \begin{pmatrix} {\widetilde{\mathbf{v}}}_0 \\ \mathbf{-} \end{pmatrix} \gets \widetilde{P}_0 \widetilde{x}_I+ \widetilde{\mathbf{p}}_I$
                \State $ {\mathbf{v}}_0 \gets T_{I,L} \mintranspose\widetilde{\mathbf{v}}_0$
                \For{\texttt{k = 0, \dots, K-1}}
                \State  $\widetilde{\mathbf{u}}_{B} \gets -\Lambda_{k} \mintranspose\left(L_k  \mathbf{x}_k  + Z_k \mathbf{v}_k + \mathbf{l}_k  \right)$ \label{alg:fullalgo-utildeB}
                \State $\widetilde{\mathbf{u}} \gets  \begin{bmatrix} \widetilde{G}_{k,u}\mathbf{x}_k + \widetilde{\mathbf{g}}_{k} \\ \widetilde{\mathbf{u}}_{B} \end{bmatrix}$ \label{alg:fullalgo-utilde}
                \State $\mathbf{u}_k \gets \N_{k,R}^{-1} \widetilde{\mathbf{u}}$ \label{alg:fullalgo-u}
                \State $\begin{pmatrix}\widetilde{\boldsymbol{\lambda}}_k \\ - \end{pmatrix} \gets \widetilde{R}_k \widetilde{\mathbf{u}} +  \widetilde{S}_k\transpose \mathbf{x}_k + \widetilde{\mathbf{r}}_k$
                \State $
                        \begin{pmatrix}
                                \mathbf{v}_{k+1} \\ {\boldsymbol{\lambda}}_k
                        \end{pmatrix}
                        \gets \N_{k,L}\mintranspose
                        \begin{pmatrix}
                                \widetilde{\boldsymbol{\lambda}}_k \\ v_k
                        \end{pmatrix}$
                \State $\mathbf{x}_{k+1} \gets  B_k \mathbf{u}_k + A_k \mathbf{x}_k + \mathbf{b}_k$
                \State $\boldsymbol{\pi}_{k+1} \gets  P_{k+1} \mathbf{x}_{k+1} + H_{k+1}\transpose \mathbf{v}_{k+1} + \mathbf{p}_{k+1}$
                \EndFor
        \end{algorithmic}
\end{algorithm}

\noindent \textbf{Implementation details.}\label{sec:impl}
Algorithm \ref{alg:fullalgo} and Algorithm \ref{alg:forward} are implemented in efficient \cpluspluslogo-code.
The implementation is inspired by the implementation of the classical Riccati recursion in the QP-solver HPIPM \cite{gianluca_efficient, frison2020hpipm}.
As the recursion consists of a series of operations on (small-scale) stagewise matrices that fit in cache for typical problem sizes, we make use of the \pkg{Blasfeo} \cite{frison2018blasfeo} library, which is performance-optimized for this case.
We take into account the structural zero and identity submatrices when performing block matrix operations.
For the decompositions of type \eqref{eq:decomp} we use an LU factorization with complete pivoting (pivot treshold is fixed at $\epsilon = 1e-5$) due to its low computational cost and the presence of structure in $\N_L$ and $\N_R$, which we exploit in the implementation.

\section{Numerical results}\label{Sec:numres}
The performance of the proposed algorithm was benchmarked against three state-of-the-art general-purpose sparse linear solvers for
symmetric indefinite systems: \pkg{MA57} (version 3.11.1) \cite{duff2004ma57}, \pkg{MUMPS} (version 5.4.1) \cite{mumps} and \pkg{PARDISO} (version 6.0) \cite{pardiso72a} \cite{pardiso72b}\cite{pardiso72c}.
Iterative refinement was turned off for all sparse linear solvers.
For fairness of comparison, automatic scaling was turned off for \pkg{MA57}, as we found out that it slowed down the solution process significantly.
The packages were linked with \pkg{METIS} \cite{karypis1997metis} for constructing a fill-in reducing reordering in the analysis phase and \pkg{Intel MKL} as BLAS and LAPACK library.
For \pkg{MUMPS} we used the sequential version while \pkg{MA57} and \pkg{PARDISO} were configured to run in a single thread.
We compiled \pkg{MA57} and \pkg{MUMPS} with \pkg{gfortran}, version 9.3.0, with compiler optimization flag \texttt{-O3}.
The analysis phase and dynamic work array memory allocation were excluded from the timings.
The C/\cpluspluslogo-code of the implementation of the proposed algorithm and \pkg{BLASFEO} were compiled with \pkg{GCC}, version 9.3.0, with compiler optimization flag \texttt{-O3}.
We used \texttt{X64\_INTEL\_HASWELL} as the target for the \pkg{BLASFEO} library, such that it made use of the \texttt{AVX2} and \texttt{FMA} Intel\textregistered~ Instruction Set Extensions that were available on the CPU of the test machine.
Our test machine was a notebook computer equipped with an Intel\textregistered~ Core\texttrademark~ i7-10850H Processor, running Ubuntu 20.04. 
The CPU clock frequency was set fixed at 2.7 GHz.

The benchmark set consisted of randomly generated problems and a quadrotor problem.
For every problem we evaluated the performance and accuracy of the proposed algorithm and the three sparse linear solvers.
To evaluate the performance we used the wall evaluation time and to evaluate the accuracy we used the infinity norm residual ratio.
The residual norm ratio is defined as the ratio of the infinity norm of the residual vector to the infinity norm of the right-hand side vector ($\|A \mathbf{x}_\text{sol} + \mathbf{b}\|_\infty / \|\mathbf{b}\|_\infty$).
For the random problems, ten random constrained LQ's were generated for every test dimension, and we took the mean wall time and worst case infinity norm residual ratio as performance and accuracy measure, respectively.

\subsection{Random problems with initial and terminal constraints}
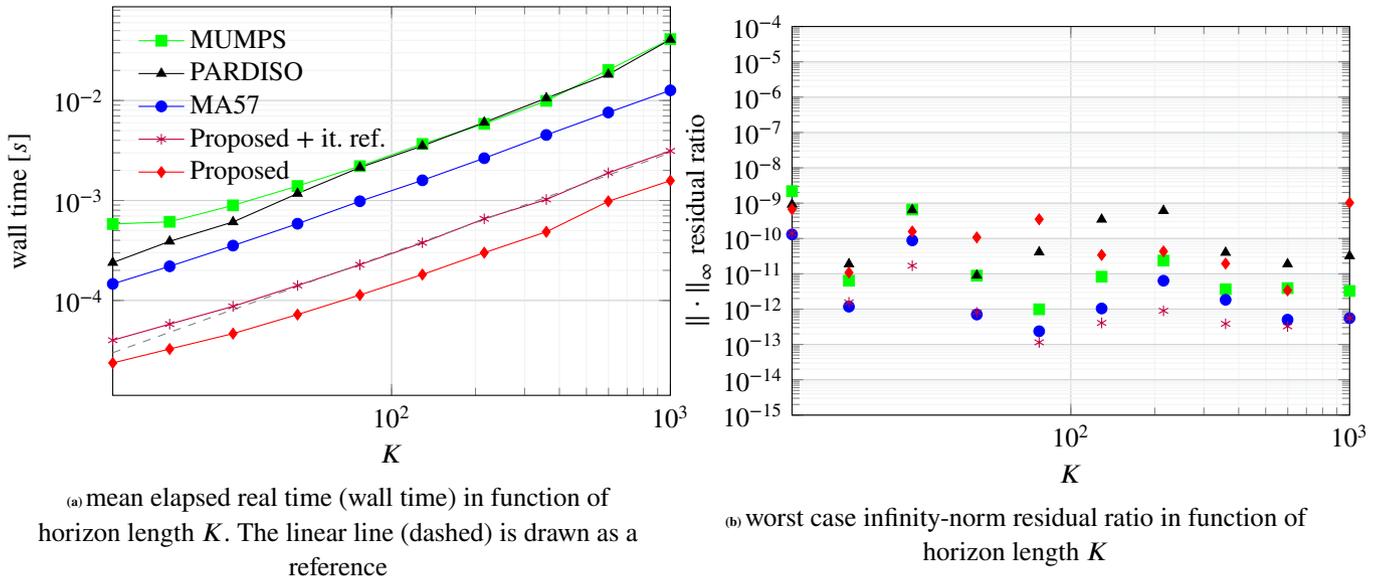
\begin{figure*}[h]
        \begin{subfigure}[t]{.5\textwidth}
                \vskip 0pt
                \captionsetup{width=.9\linewidth,justification=centering}
                \pgfplotstableread[col sep = comma]{output.csv}{\table}
                \begin{tikzpicture}
                        \begin{axis}[
                                xmode=log, ymode=log,
                                xmin = 10, xmax = 1000,
                                ymin = 0, ymax = 0,
                                xtick distance = 10,
                                ytick distance = 10,
                                grid = both,
                                minor tick num = 10,
                                major grid style = {lightgray},
                                minor grid style = {lightgray!25},
                                width = 1.0\linewidth,
                                height = 0.75\linewidth,
                                legend cell align = {left},
                                legend pos = north west,
                                legend style ={draw=none, fill=none},
                                xlabel = $K$,
                                ylabel = wall time \text{[}$s$\text{]}
                                        ]
                                \addplot[green, mark = square*] table [x = {log(hor)}, y = {log(mumps)}] {\table};
                                \addplot[black, mark = triangle*] table [x = {log(hor)}, y = {log(pardiso)}] {\table};
                                \addplot[blue, mark = *] table [x = {log(hor)}, y = {log(ma57)}] {\table};
                                \addplot[purple, mark = asterisk] table [x = {log(hor)}, y = {log(itref)}] {\table};
                                \addplot[red, mark = diamond*] table [x = {log(hor)}, y = {log(fatrop)}] {\table};
                                \addplot[gray, dashed] plot coordinates{(10,0.00003) (1000,0.003)};
                                \legend{
                                        MUMPS,
                                        PARDISO,
                                        MA57,
                                        Proposed + it. ref.,
                                        Proposed,
                                }
                        \end{axis}
                \end{tikzpicture}
                \caption{\normalsize mean elapsed real time (wall time) in function of horizon length $K$. The linear line (dashed) is drawn as a reference}\label{fig:aa}
        \end{subfigure}%
        \begin{subfigure}[t]{.5\textwidth}
                \vskip 0pt
                \captionsetup{width=.9\linewidth,justification=centering}
                \pgfplotstableread[col sep = comma]{output_acc_random1.csv}{\table}
                \begin{tikzpicture}
                        \begin{axis}[
                                xmode=log, ymode=log,
                                xmin = 10, xmax = 1000,
                                ymin = 1e-15, ymax = 0.0001,
                                xtick distance = 10,
                                ytick distance = 10,
                                grid = both,
                                minor tick num = 10,
                                major grid style = {lightgray},
                                minor grid style = {lightgray!25},
                                width = 1.0\linewidth,
                                height = 0.75\linewidth,
                                legend cell align = {left},
                                legend pos = south east,
                                legend style ={draw=none},
                                xlabel = $K$,
                                ylabel = $\|\cdot \|_\infty$ residual ratio 
                                        ]
                                \addplot[green, mark = square*, only marks] table [x = {log(hor)}, y = {log(mumps)}] {\table};
                                \addplot[blue, mark = *, only marks] table [x = {log(hor)}, y = {log(ma57)}] {\table};
                                \addplot[black, mark = triangle*, only marks] table [x = {log(hor)}, y = {log(pardiso)}] {\table};
                                \addplot[red, mark = diamond*, only marks] table [x = {log(hor)}, y = {log(fatrop)}] {\table};
                                \addplot[purple, mark = asterisk, only marks] table [x = {log(hor)}, y = {log(itref)}] {\table};
                        \end{axis}
                \end{tikzpicture}
                \caption{\normalsize worst case infinity-norm residual ratio in function of horizon length $K$}\label{fig:ab}
        \end{subfigure}
        \caption{\normalsize performance and numerical accuracy of proposed algorithm and general-purpose solvers for randomly generated problems in function of horizon length $K$ for problems with $n_x = 10$, $n_u = 5$ and ten constraints on the initial and terminal state.} \label{fig:random1}
\end{figure*}        
The first experiment consisted of randomly generated problems of fixed control input and state dimensions, $n_u=5$, $n_x=10$.
We fully constrained the initial and terminal state with $n_x$ constraints each. The horizon length $K$ was varied from $10$ to $1000$.
The results are shown in Figure \ref{fig:random1}.
For all solvers the measured wall times scaled linearly with the horizon length.
The proposed algorithm without iterative refinement was roughly eight times faster than \pkg{MA57}, the fastest general-purpose sparse linear solver for these problems.
The proposed algorithm with iterative refinement was roughly three to four times faster than \pkg{MA57}.
All problems reached an residual norm ratio lower than $10^{-9}$ without iterative refinement and lower than $10^{-10}$ after a single iterative refinement step.
The accuracy of the proposed algorithm was comparable to the general-purpose solvers.
Iterative refinement was able to improve the accuracy of all problems. 
\subsection{Random problems with constraints on every stage}
\begin{figure*}[h]
        \begin{subfigure}[t]{.5\textwidth}
                \vskip 0pt
                \captionsetup{width=.9\linewidth,justification=centering}
                \pgfplotstableread[col sep = comma]{output2.csv}{\table}
                \begin{tikzpicture}
                        \begin{axis}[
                                xmode=log, ymode=log,
                                xmin = 4, xmax = 180,
                                ymin = 0, ymax = 25,
                                xtick distance = 10,
                                ytick distance = 10,
                                grid = both,
                                minor tick num = 10,
                                major grid style = {lightgray},
                                minor grid style = {lightgray!25},
                                width = 1.0\linewidth,
                                height = 0.75\linewidth,
                                legend cell align = {left},
                                legend pos = north west,
                                legend style ={draw=none, fill=none},
                                xlabel = $n_x$,
                                ylabel = wall time \text{[}$s$\text{]}
                                        ]
                                \addplot[green, mark = square*] table [x = {log(hor)}, y = {log(mumps)}] {\table};
                                \addplot[black, mark = triangle*] table [x = {log(hor)}, y = {log(pardiso)}] {\table};
                                \addplot[blue, mark = *] table [x = {log(hor)}, y = {log(ma57)}] {\table};
                                \addplot[purple, mark = asterisk] table [x = {log(hor)}, y = {log(itref)}] {\table};
                                \addplot[red, mark = diamond*] table [x = {log(hor)}, y = {log(fatrop)}] {\table};
                                \addplot[gray, dashed] plot coordinates{(1,0.00000025) (1000,250)};

                                \legend{
                                        MUMPS,
                                        PARDISO,
                                        MA57,
                                        Proposed + it. ref.,
                                        Proposed,
                                }
                        \end{axis}
                \end{tikzpicture}
                \caption{\normalsize mean elapsed real time (wall time) in function of state dimension $n_x$. The cubic line (dashed) is drawn as a reference\\}\label{fig:ab}
        \end{subfigure}
        \begin{subfigure}[t]{.5\textwidth}
                \vskip 0pt
                \captionsetup{width=.9\linewidth,justification=centering}
                \pgfplotstableread[col sep = comma]{output_acc_random2.csv}{\table}
                \begin{tikzpicture}
                        \begin{axis}[
                                xmode=log, ymode=log,
                                xmin = 4, xmax = 180,
                                ymin = 1e-15, ymax = 0.0001,
                                xtick distance = 10,
                                ytick distance = 10,
                                grid = both,
                                minor tick num = 10,
                                major grid style = {lightgray},
                                minor grid style = {lightgray!25},
                                width = 1.0\linewidth,
                                height = 0.75\linewidth,
                                legend cell align = {left},
                                legend pos = south east,
                                legend style ={draw=none},
                                xlabel = $n_x$,
                                ylabel = $\|\cdot\|_\infty$ residual ratio 
                                        ]
                                \addplot[blue, mark = *, only marks] table [x = {log(hor)}, y = {log(ma57)}] {\table};
                                \addplot[green, mark = square*, only marks] table [x = {log(hor)}, y = {log(mumps)}] {\table};
                                \addplot[black, mark = triangle*, only marks] table [x = {log(hor)}, y = {log(pardiso)}] {\table};
                                \addplot[red, mark = diamond*, only marks] table [x = {log(hor)}, y = {log(fatrop)}] {\table};
                                \addplot[purple, mark = asterisk, only marks] table [x = {log(hor)}, y = {log(itref)}] {\table};
                        \end{axis}
                \end{tikzpicture}
                \caption{\normalsize worst case infinity-norm residual ratio in function of state dimension $n_x$\\}\label{fig:bb}
        \end{subfigure}
        \caption{performance and numerical accuracy of proposed algorithm and general-purpose linear solvers for randomly generated problems in function of state dimension $n_x$ for problems with $K =100$, $n_u = n_x/2$ and $n_x/4$ constraints on every state of the control horizon.} \label{fig:random2}
\end{figure*}
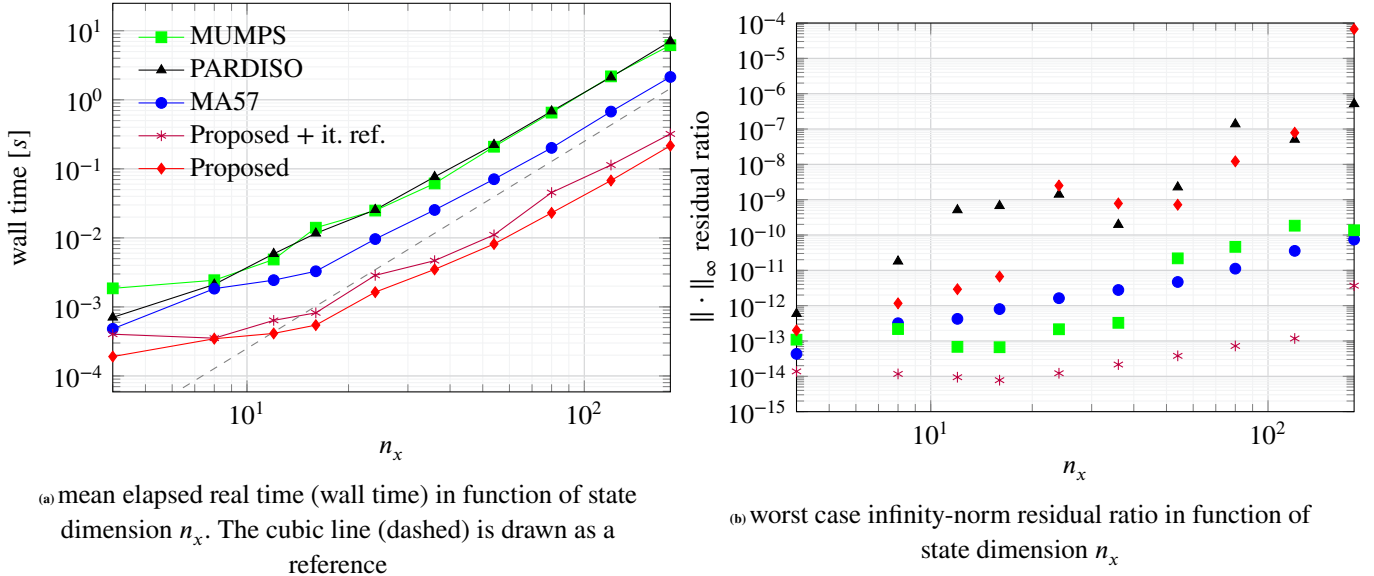
The second experiment consisted of randomly generated problems of fixed horizon length of $K=100$, the state dimension $n_x$ was varied between $4$ and $180$, the control input dimensions was $n_u=n_x/2$.
We put $n_x/4$ constraints on each state of the control horizon.
The results are shown in Figure \ref{fig:random2}.
For large enough problem dimensensions the measured wall times scaled cubically with the state dimension $n_x$.
The proposed algorithm without iterative refinement was roughly ten times faster than \pkg{MA57}, the fastest general-purpose sparse linear solver for these problems.
The proposed algorithm with iterative refinement was roughly four to five times faster than \pkg{MA57}.
The numerical accuracy of the proposed algorithm detoriated as problem dimensions increased.
However, iterative refinement was always able to improve the accuracy to a high precision.

\subsection{Quadrotor problem}
The state vector of this quadrotor problem consisted of the position, velocity, and three Euler angles, representing the quadrotor's orientation.
The control inputs were the acceleration in the upward direction of the drone and the Euler angle rates.
The dynamical model was based on the work of Zhang et al. \cite{Zhang2014}.
The initial equality constraint fixed the full state at the reference position and a disturbance in velocity and orientation from equilibrium, while the terminal constraint fixed the full state at the reference position in equilibrium.
The quadratic objective encoded the task to stabilize the quadrotor with minimum input energy.
The problem dimensions were $n_x = 10$, $n_u = 4$, $K = 50$, and $10$ initial and terminal constraints.
The results are shown in Table \ref{tab:quadrotor}.
The proposed algorithm is roughly twelve times faster than \pkg{MA57}, the fastest solver for this problem.
The speed-up reduced to roughly seven times when iterative refinement was applied.
The numerical accuracy of the proposed algorithm with iterative refinement was comparable to the accuracy of the other solvers.
\begin{table}[h]\centering
        \begin{tabular}{|c|ccccc|} 
                \hline
                & MUMPS & PARDISO & MA57 & proposed & proposed + it. ref. \\ \hline
                wall time [ms] & 1.21 & 1.47 & 0.97 & 0.075 & 0.14 \\
                $\| \cdot \|_\infty$ residual ratio & $\expnumber{8.90}{-12}$& $\expnumber{6.99}{-11}$ & $\expnumber{1.29}{-12}$ & $\expnumber{1.39}{-09}$  & $\expnumber{1.44}{-13}$ \\ \hline
        \end{tabular}
        \caption{performance and numerical accuracy for the quadrotor problem}\label{tab:quadrotor}
\end{table}
\section{Discussion}
\label{sec:discussion}
\textbf{Comparison with other work.} 
Several connections between the method proposed in this paper and other approaches for solving the constrained LQ problem can be made:
\begin{itemize}
        \item Sideris \& Rodriguez \cite{sideris2010riccati} developed a factorization algorithm that is able to solve the OCP when the initial state is fixed. When ${G_{k,u}}$ has full row rank the computational complexity is linear with the horizon length, but in general it is cubic. Furthermore, the algorithm assumes positive definiteness of the full-space Hessian.
        \item Giftthaler \& Buchli \cite{giftthaler2017projection} also make use of the nullspace of ${\overline{G}_{u}}$.
        Whereas we use a parameterization of a basis of this nullspace, their approach makes use of a projection onto this nullspace.
This results in a (possibly) singular optimal control problem which can be solved by a modified version of the classical Riccati recursion.
The approach has a computational complexity that scales linearly with the horizon length but puts some restrictions on the stagewise constraints,
which for example makes the algorithm unable to solve problems where the number of stagewise equalities exceeds the control input dimensions $n_u$.
In addition, positive semi-definiteness of $Q_k$ and positive definiteness of $R_k$ are assumed.
\item The work of Domahidi et al. \cite{domahidi2012efficient} is used as the linear solver in the \pkg{FORCES NLP} solver \cite{zanelli2020forces}. This solver has proven its use in many application domains such as robotics, automotive and aerospace.
The linear solver supports a more general formulation than the constrained LQ problem treated in this paper.
It makes use of a structure-exploiting range space method, which is linear with horizon length, but it assumes positive definiteness of the full-space Hessian.
\item The method described by Laine \& Tomlin \cite{laine2018efficient} also makes use of the nullspace of ${\overline{G}_{u}}$, in a similar way as our method.
This method however relies on computationally expensive operations that we avoid, such as an SVD for removing linearly dependent constraints.
\end{itemize}
We did not compare our method to the aforementioned state-of-the-art tailored solvers as they assume regularity constraints that we avoid.
Moreover, these papers do not provide publicly available implementations of their algorithms.

\noindent \textbf{Optimal feedback control policies.}
Apart from solving the LQ system, the quantities appearing in the backward substitution (Algorithm \ref{alg:fullalgo}) can also be used for obtaining optimal feedback control policies.
These policies are useful for developing DDP-style algorithms.
The optimal feedback control policies can be constructed from Algorithm \ref{alg:fullalgo}, steps \ref{alg:fullalgo-utildeB} to \ref{alg:fullalgo-u} as:
\begin{equation}
        \mathbf{u}_k = K_k \mathbf{x}_k + \mathbf{k}_k,
\end{equation}
with
\begin{equation}
        K_k = T_{k,R}^{-1} \left[ \begin{array}{c} \widetilde{G}_{k,u} \\ -\Lambda_k \mintranspose L_k \end{array} \right]
\end{equation}
and
\begin{equation}
        k_k  = T_{k,R}^{-1} \left[\begin{array}{c}
                        \widetilde{\mathbf{g}}_k \\-\Lambda_k  \mintranspose \mathbf{l}_k
                \end{array}\right].
\end{equation}
Here, we recognize $K_k$ as the feedback gain matrix and $k_k$ as the feedforward term.
This policy automatically satisfies the subset of the constraints that can be satisfied by choosing the appropriate inputs at the current stage, as explained in Section \ref{sec:recursion}.
For the first stage the following state constraint has to be satisfied:
\begin{equation}
        H_0 \mathbf{x}_0 = -\mathbf{h}_0.
\end{equation}

\noindent \textbf{Limitations of the approach.} 
For large problem state dimensions this resulted in a numerical accuracy that is inferior as compared to the tested general-purpose linear solvers when no iterative refinement is applied.
We believe this is because the algorithm does not implement advanced pivoting strategies.
To tackle this problem more advanced pivoting or scaling strategies could be implemented.
A difficulty is that the approach allows less freedom in the choice of pivots as compared to general-purpose indefinite solvers.
Another possible solution is a different choice of matrix decomposition for the stagewise constraints such as an SVD decomposition instead of the LU decomposition used in this paper.
These problems are left for future work.

\noindent \textbf{Software implementation.} 
The \cpluspluslogo{} implementation of the algorithm proposed in this paper is freely available at \url{https://github.com/lvanroye/generalization_riccati}.
This repository also contains the code for reproducing the results of this paper.

\section{Conclusion and outlook} \label{sec:conclusions}
We developed a novel algorithm for efficiently calculating optimal feedback control policies and solutions of stagewise equality-constrained LQ problems without imposing conservative regularity conditions on the problem at hand.
Numerical experiments illustrate that this approach allows for computationally efficient implementations.
The promising potential of the proposed approach encourages the authors to develop a nonlinear optimal control problem solver, using the discussed recursion as linear solver.
A topic of future work is to investigate more rigorously the impact of inexact arithmetic and improving the numerical accuracy.

\subsection*{FUNDING INFORMATION}
This work was supported by European Research Council (ERC) under the European Union's Horizon 2020 research and innovation programme (Grant agreement: ROBOTGENSKILL No. 788298) and FWO project G0D1119N of the Research Foundation - Flanders (FWO - Flanders).
\subsection*{ORCID}
\textit{Lander Vanroye} \orcidm{0000-0003-3968-1061} \\
\textit{Joris De Schutter} \orcidm{0000-0001-9619-5815} \\
\textit{Wilm Decré} \orcidm{0000-0002-9724-8103}




\bibliography{wileyNJD-AMA}%
\section*{APPENDIX}
\label{appendix:B}
In this appendix we show that the reduced Hessian of the LQ optimal control problem \ref{eq:LQOCP} is positive definite if and only if the matrices $\widehat{R}_k$ for all $k = K-1, \dots, 0$ and $\widehat{P}_I$ appearing in Algorithm \ref{alg:fullalgo} are positive definite. 
The algorithm consists of three different types of steps: substitution, symmetric transformation and Schur-complement steps. Each step transforms the KKT matrix. In the remainder of this appendix we will show that the reduced Hessian
of the KKT matrix before transformation is positive definite if and only if the reduced Hessian of the transformed KKT matrix is positive definite. For a Schur complement step we also require that the Hessian submatrix that is associated with the eliminated variables is positive definite.
\begin{definition}[Inertia of a matrix] \cite{nocedal2006numerical}
        We define the inertia of a symmetric matrix $\mathcal{K}$ as the scalar triple that indicates the number $n_+$, $n_-$ and $n_0$ of positive, negative and zero eigenvalues, respectively: \[\text{inertia}(\mathcal{K})=(n_+, n_-, n_0). \]
\end{definition}
\begin{theorem}[Sylvester's Law of Inertia ]\cite{SylvestersTheorem}\label{Sylvesters}
        Let $A$ and $B$ be congruent real symmetric matrices. This means that $B = S A S\transpose$ for some invertible matrix $S$. Then $A$ and $B$ have the same inertia.
\end{theorem}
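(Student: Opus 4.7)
The plan is to prove Sylvester's Law by combining a rank argument for the zero-eigenvalue count with a variational (subspace-dimension) characterization of the positive and negative eigenvalue counts. Since congruence by an invertible $S$ is a linear bijection of $\mathbb{R}^n$ that preserves the sign of the associated quadratic form after the change of variable $y = S\mintranspose x$, such a characterization makes the argument essentially immediate.

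First I would handle $n_0$. Because $S$ is invertible, $\mathrm{rank}(B) = \mathrm{rank}(S A S\transpose) = \mathrm{rank}(A)$, and since $A$ and $B$ are real symmetric and thus diagonalizable with real spectrum, the number of zero eigenvalues of each equals $n - \mathrm{rank}$; hence $n_0(A) = n_0(B)$.

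The heart of the proof is the identity $n_+(A) = n_+(B)$, and analogously for $n_-$. I would first establish the following fact: for a real symmetric $M$, $n_+(M)$ equals the largest dimension of a subspace $V \subseteq \mathbb{R}^n$ on which $x\transpose M x > 0$ for every nonzero $x \in V$. This follows from the spectral decomposition $M = Q \Lambda Q\transpose$: the positive eigenspace is itself a positive subspace of dimension $n_+(M)$, and any subspace of strictly larger dimension must intersect the non-positive eigenspace nontrivially by a dimension count, producing a nonzero $x$ with $x\transpose M x \leq 0$. With this characterization, the congruence step is clean: if $V$ is a positive subspace for $A$, then $\tilde V := S\mintranspose V$ has the same dimension, and for any nonzero $y = S\mintranspose v \in \tilde V$ with $v \in V$, $y\transpose B y = (S\transpose y)\transpose A (S\transpose y) = v\transpose A v > 0$. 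Hence $n_+(B) \geq n_+(A)$, and running the argument in reverse, using that $A = S^{-1} B S\mintranspose$ is itself a congruence with invertible matrix $S^{-1}$, yields the reverse inequality. The proof for $n_-$ is identical with the sign of the quadratic-form inequality flipped.

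The main obstacle is the variational characterization of $n_+$, which is where the spectral theorem enters and which carries most of the content of the theorem; once available, everything else is bookkeeping. One could alternatively diagonalize $A$ and $B$ by orthogonal congruence and compare their diagonal forms directly, but that route essentially duplicates the subspace-maximization argument. The chosen route has the merit of making explicit what congruence preserves — the signature of the quadratic form — and this is precisely the property that will be used later in the appendix: each of the substitution, symmetric-transformation, and Schur-complement steps of Algorithm \ref{alg:fullalgo} can be interpreted as a congruence transformation acting on a sub-block of the KKT matrix, so Sylvester's Law will provide the bridge between positive-definiteness of the reduced Hessian of \eqref{eq:LQOCP} and positive-definiteness of each $\widehat{R}_k$ and $\widehat{P}_I$ that arises in the recursion.
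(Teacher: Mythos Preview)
Your argument is correct and is one of the standard proofs of Sylvester's Law: the rank argument for $n_0$ is immediate, and the variational characterization of $n_+$ (maximal dimension of a subspace on which the quadratic form is strictly positive) combined with the bijection $V \mapsto S\mintranspose V$ gives $n_+(A)=n_+(B)$, with $n_-$ handled symmetrically.

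Note, however, that the paper does not supply its own proof of this theorem. Sylvester's Law is stated with a citation and then invoked as a tool in the appendix (for the symmetric-transformation and Schur-complement steps). So there is no ``paper's approach'' to compare against; your proof simply fills in a classical result that the authors take as known. Your closing paragraph correctly anticipates how the theorem is used downstream.
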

\begin{theorem}\label{inertiath}\cite{gould1985practical,nocedal2006numerical}
        Consider the KKT matrix $\mathcal{K}$ \eqref{eq:kdef} and Definition \ref{def:reducedhess}, and suppose $A$ has full row rank. Then  \[\text{inertia}(\mathcal{K})= \text{inertia}\left(Z\transpose HZ\right) + (m,m,0) .\]
\end{theorem}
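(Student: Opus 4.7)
The plan is to apply Sylvester's Law of Inertia (Theorem \ref{Sylvesters}) successively, reducing $\mathcal{K}$ via congruence transformations to a block-diagonal form whose inertia can be read off directly. The key geometric idea is to split $\realvectors{n}$ using the basis $Z$ of $\ker A$ together with a complementary basis $Y \in \realmatrices{n}{m}$ of a complement of $\ker A$; because $A$ has full row rank, the matrix $B := A Y$ is an invertible $m \times m$ block, and this invertibility is what will drive every subsequent elimination step.

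First, I would apply the congruence induced by the invertible $(n+m) \times (n+m)$ matrix
\[ N_1 := \begin{bmatrix} Z & Y B^{-1} & \zerom{n \times m} \\ \zerom{m \times (n-m)} & \zerom{m \times m} & \identm{m} \end{bmatrix}. \]
Using $A Z = \zerom{}$ and $A Y B^{-1} = \identm{m}$, a direct block computation will produce
\[ N_1\transpose \mathcal{K}\, N_1 = \begin{bmatrix} Z\transpose H Z & V\transpose & \zerom{} \\ V & W & \identm{m} \\ \zerom{} & \identm{m} & \zerom{} \end{bmatrix}, \]
with $V := B\mintranspose Y\transpose H Z$ and $W := B\mintranspose Y\transpose H Y B^{-1}$. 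The identity blocks in positions $(2,3)$ and $(3,2)$ then serve as pivots for a second symmetric elimination that annihilates the couplings $V$ and $V\transpose$ (i.e., a congruence of the shape $\mathrm{diag}(\identm{n-m},\identm{m},\identm{m})$ with an additional $-V$ entry in the $(3,1)$ slot), producing a block-diagonal matrix whose diagonal blocks are $Z\transpose H Z$ and $\begin{bmatrix} W & \identm{m} \\ \identm{m} & \zerom{} \end{bmatrix}$.

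The last step is to determine the inertia of the $2m \times 2m$ bottom-right block. A shear congruence with $\begin{bmatrix} \identm{m} & \zerom{} \\ -\tfrac{1}{2} W & \identm{m} \end{bmatrix}$ clears $W$ and reduces the block to $\begin{bmatrix} \zerom{} & \identm{m} \\ \identm{m} & \zerom{} \end{bmatrix}$, whose eigenvalues are $+1$ and $-1$ each with multiplicity $m$, so its inertia is $(m,m,0)$. Summing the inertias of the two diagonal blocks and invoking Sylvester's Law of Inertia closes the argument. I expect the main obstacle to be purely bookkeeping: dimension tracking and verifying that each elementary operation is applied symmetrically, so that congruence (and hence the inertia) is preserved throughout. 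The full row-rank hypothesis on $A$ is used exactly once, to guarantee invertibility of $B$; everything else then follows mechanically.
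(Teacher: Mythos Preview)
The paper does not actually prove this theorem; it is stated with citations to \cite{gould1985practical,nocedal2006numerical} and invoked as a known result in the appendix. Your argument is correct and is essentially the standard proof found in those references: split $\realvectors{n}$ via a null-space basis $Z$ and a complement $Y$, apply the congruence induced by $N_1$ to expose the block structure, then use the identity pivots (coming precisely from the full-row-rank hypothesis on $A$) to eliminate the cross terms. One small point worth making explicit when you write it up: the shear congruence in the last step clears $W$ only because $W = B\mintranspose Y\transpose H Y B^{-1}$ is symmetric, which in turn relies on $H \in \symmetricmatrices{n}$; otherwise the $(1,1)$ block becomes $\tfrac{1}{2}(W - W\transpose)$ rather than zero.
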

\begin{corollary}
        A KKT matrix $\mathcal{K}$ \eqref{eq:kdef} has a positive definite reduced Hessian if and only if \[ \text{inertia}(\mathcal{K}) = (n, m ,0). \]
\end{corollary}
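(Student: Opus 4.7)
The plan is to derive the corollary as an immediate consequence of Theorem \ref{inertiath}, with the full row rank assumption on $A$ being inherited from that theorem. The key observation is that positive definiteness of a symmetric matrix is equivalent to all of its eigenvalues being strictly positive, which in the language of inertia means the triple takes the form $(\text{size}, 0, 0)$.

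First I would record the dimensions: since $A \in \realmatrices{m}{n}$ has full row rank $m$, its nullspace has dimension $n - m$, so any nullspace basis matrix $Z$ lies in $\realmatrices{n}{n-m}$ and the reduced Hessian $Z\transpose H Z$ is of size $(n-m) \times (n-m)$. Hence the reduced Hessian is positive definite if and only if
\begin{equation*}
\inert{Z\transpose H Z} = (n-m,\,0,\,0).
\end{equation*}

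Next I would plug this into Theorem \ref{inertiath}. For the forward direction, assuming positive definiteness of the reduced Hessian yields
\begin{equation*}
\inert{\mathcal{K}} = (n-m,\,0,\,0) + (m,\,m,\,0) = (n,\,m,\,0),
\end{equation*}
which is exactly the claimed identity. For the converse, assuming $\inert{\mathcal{K}} = (n,m,0)$ and subtracting $(m,m,0)$ componentwise (which is valid because the theorem states equality of triples) gives $\inert{Z\transpose H Z} = (n-m,\,0,\,0)$; since $Z\transpose H Z$ is symmetric with only positive eigenvalues, it is positive definite.

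There is essentially no obstacle here — the proof is a one-line arithmetic manipulation of inertia triples once Theorem \ref{inertiath} is invoked. The only minor subtlety is to remark that the full row rank hypothesis on $A$ is tacitly carried over from the theorem, and to point out that the nonuniqueness of $Z$ (noted earlier in the paper) is immaterial because the inertia of $Z\transpose H Z$ is basis-independent, as one sees from Sylvester's Law of Inertia (Theorem \ref{Sylvesters}) applied to the congruence between any two reduced Hessians obtained from different nullspace bases.
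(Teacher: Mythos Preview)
Your proposal is correct and is exactly the immediate derivation the paper intends: the corollary is stated without proof in the paper, as a direct consequence of Theorem~\ref{inertiath}, and your argument spells out precisely that one-line inertia arithmetic. Your remarks on the tacit full row rank assumption and on the basis-independence of the reduced Hessian's inertia via Sylvester's law are valid and slightly more explicit than the paper, but the approach is the same.
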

\subsection*{Substitution step}
\begin{definition} \label{defsubs}
        A nullspace projection step transforms the KKT matrix \begin{equation} \label{defsubseq}\mathcal{K}_s =  \left[
                        \begin{array}{cc;{2pt/2pt}c}
                                H & A\transpose & B\transpose \\
                                A &             &             \\ \hdashline[2pt/2pt]
                                B &             &             \\
                        \end{array}
                        \right],  \end{equation}
        where $H \in \mathbb{S}^n$, $A\in\mathbb{R}^{m_A \times n}$ and $B\in\mathbb{R}^{m_B \times n}$ and $\text{rank}(B)= m_B$, to a KKT matrix
        \[
                \widehat{\mathcal{K}}_s=\begin{bmatrix}
                        Z_B\transpose H Z_B & Z_B\transpose A\transpose \\ A Z_B &
                \end{bmatrix},
        \]
        where $Z_B$ is a matrix whose columns span the nullspace of $B$.
\end{definition}
In the substitution steps of the proposed algorithm we have $B = \begin{bmatrix} -\identm{\rho} & \zerom{} \end{bmatrix}$. So $Z_B = \begin{bmatrix}\zerom{} & \identm{m_B-\rho} \end{bmatrix}\transpose$. It is easy to verify that substitution of this equation and elimination of the associated dual variables corresponds to the transformation of Definition \ref{defsubs}.
\begin{theorem}
        The reduced Hessian of $\mathcal{K}_s$ \eqref{defsubseq} is positive definite if and only if the reduced Hessian of the transformed KKT matrix $\widehat{\mathcal{K}}_s$ is positive definite.
\end{theorem}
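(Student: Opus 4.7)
The plan is to produce an explicit congruence transformation that decouples $\mathcal{K}_s$ into $\widehat{\mathcal{K}}_s$ and an auxiliary block of known inertia; combined with Sylvester's Law of Inertia (Theorem \ref{Sylvesters}) and the corollary of Theorem \ref{inertiath}, this reduces the claim to an easy inertia identity.

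Since $B$ has full row rank $m_B$, I would first choose $Y_B \in \realmatrices{n}{m_B}$ with $B Y_B = \identm{m_B}$ and $[Y_B\ Z_B]$ invertible, and then apply to $\mathcal{K}_s$ the block-diagonal congruence whose top-left block is $[Y_B\ Z_B]$ and whose remaining two diagonal blocks are $\identm{m_A}$ and $\identm{m_B}$. After a harmless permutation that groups the $(Z_B, A)$ rows and columns apart from the $(Y_B, B)$ ones, the result takes the block form
\[
\begin{bmatrix} \widehat{\mathcal{K}}_s & F \\ F\transpose & D \end{bmatrix}, \qquad D = \begin{bmatrix} Y_B\transpose H Y_B & \identm{m_B} \\ \identm{m_B} & \end{bmatrix},
\]
where $F$ has nonzero entries only in its first block column (the $Y_B$ part), because the $\lambda_B$ column of the transformed matrix has zeros everywhere outside $D$. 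A direct calculation, using that $D^{-1}$ has zero top-left block, yields $F D^{-1} F\transpose = 0$, so a Schur-complement congruence with respect to $D$ reduces the matrix to block-diagonal form with diagonal blocks $\widehat{\mathcal{K}}_s$ and $D$. A further obvious congruence sends $D$ to the anti-diagonal identity block, so $D$ has inertia $(m_B, m_B, 0)$, and Sylvester's Law yields the key identity
\[\inert{\mathcal{K}_s} = \inert{\widehat{\mathcal{K}}_s} + (m_B,\, m_B,\, 0).\]

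To finish, I would apply the corollary of Theorem \ref{inertiath} at both ends. The combined constraint Jacobian $\begin{bmatrix} A \\ B \end{bmatrix}$ has full row rank under the standing linear-independence assumption, and the same is true of $A Z_B$: if $v\transpose A Z_B = 0$ then $v\transpose A$ lies in the row span of $B$, and linear independence of the full Jacobian forces $v = 0$. The corollary then equates positive definiteness of the two reduced Hessians with $\inert{\mathcal{K}_s} = (n,\, m_A + m_B,\, 0)$ and $\inert{\widehat{\mathcal{K}}_s} = (n - m_B,\, m_A,\, 0)$, respectively, and these two inertia conditions are equivalent through the displayed identity above.

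I expect the main obstacle to be the vanishing of the Schur-complement correction $F D^{-1} F\transpose$: it relies both on the specific block sparsity of $F$ (its $\lambda_B$ column being zero, which is structural after the congruence) and on the off-diagonal form of $D^{-1}$. Without exploiting both of these, a correction term involving $Y_B\transpose H Y_B$ remains and the clean decoupling is lost. The rank-transfer from $\begin{bmatrix} A \\ B \end{bmatrix}$ to $A Z_B$ is routine but must be recorded, since Theorem \ref{inertiath} is invoked on both sides.
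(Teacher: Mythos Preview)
Your argument is correct. The paper's proof is considerably shorter because it applies Theorem \ref{inertiath} directly rather than rebuilding its content by hand: it re-partitions $\mathcal{K}_s$ so that the ``Hessian'' block is $\begin{bmatrix} H & A\transpose \\ A & \end{bmatrix}$ and the ``constraint'' block is $[B\ \ 0]$, observes that $Z_s = \begin{bmatrix} Z_B & \\ & \identm{m_A} \end{bmatrix}$ spans the nullspace of the latter, and then Theorem \ref{inertiath} immediately gives $\inert{\mathcal{K}_s} = \inert{\widehat{\mathcal{K}}_s} + (m_B, m_B, 0)$, since $Z_s\transpose \begin{bmatrix} H & A\transpose \\ A & \end{bmatrix} Z_s = \widehat{\mathcal{K}}_s$. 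Your explicit $[Y_B\ Z_B]$ congruence followed by the Schur-complement decoupling with $F D^{-1} F\transpose = 0$ reaches the same inertia identity but effectively re-derives this instance of Gould's result from Sylvester alone; what you gain is a self-contained computation that does not rely on spotting the re-partitioning trick, and you are also more explicit than the paper about the final step (verifying that $A Z_B$ inherits full row rank and invoking the corollary on both sides), which the paper leaves to the reader.
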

\begin{proof}
        A matrix whose columns span the nullspace of the bottom partition of the constraint Jacobian of \eqref{defsubseq} is $Z_s = \begin{bmatrix} Z_B & \\ &\identm{m_A} \end{bmatrix}$. Because $\widehat{\mathcal{K}}_s = Z_s\transpose \mathcal{K}_s Z_s$ and of Theorem \ref{inertiath}: $\text{inertia}(\mathcal{K}_s) = \text{inertia}(\widehat{\mathcal{K}}_s) + (m_B, m_B, 0)$.
\end{proof}

\addtolength{\textheight}{-7cm}

\subsection*{Schur complement step}
\begin{definition}
        A Schur complement step transforms the KKT matrix
        \begin{equation} \label{defschur}
                \mathcal{K}_c = \begin{bmatrix}
                        H & S\transpose &             \\
                        S & G           & A\transpose \\
                          & A           &
                \end{bmatrix},
        \end{equation}
        where $H \in \mathbb{S}^{n_H}$ and $G\in \mathbb{S}^{n_G}$ and $A \in \mathbb{R}^{m\times n_G}$ and $\text{rank}(A)=m$, to a KKT matrix
        \begin{equation}
                \widehat{\mathcal{K}}_c = \begin{bmatrix}
                        G - SH^{-1}S\transpose & A\transpose \\
                        A                      &
                \end{bmatrix}.
        \end{equation}
\end{definition}
\begin{lemma} \label{Hpos}
        If the reduced Hessian of $\mathcal{K}_c$ in \eqref{defschur} is positive definite, then $H$ is positive definite.
\end{lemma}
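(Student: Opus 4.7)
\textbf{Proof proposal for Lemma \ref{Hpos}.} The plan is to compute the reduced Hessian of $\mathcal{K}_c$ explicitly and observe that $H$ sits as a leading principal block inside it, so positive definiteness of $H$ is immediate from positive definiteness of the reduced Hessian.

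First, I would identify the constraint Jacobian of $\mathcal{K}_c$. Writing the primal variables as $(x_H, x_G)$ with $x_H \in \realvectors{n_H}$ and $x_G \in \realvectors{n_G}$, the bottom block row of \eqref{defschur} gives the constraint Jacobian
\begin{equation*}
\begin{bmatrix} \zerom{m\times n_H} & A \end{bmatrix}.
\end{equation*}
Since $\text{rank}(A)=m$, this Jacobian has full row rank $m$, so Theorem \ref{inertiath} and the Corollary apply. A basis for its nullspace is
\begin{equation*}
Z = \begin{bmatrix} \identm{n_H} & \zerom{} \\ \zerom{} & Z_A \end{bmatrix},
\end{equation*}
where the columns of $Z_A$ span the nullspace of $A$.

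Next I would compute the reduced Hessian by direct block multiplication:
\begin{equation*}
Z\transpose \begin{bmatrix} H & S\transpose \\ S & G \end{bmatrix} Z
= \begin{bmatrix} H & S\transpose Z_A \\ Z_A\transpose S & Z_A\transpose G Z_A \end{bmatrix}.
\end{equation*}
This is a reduced Hessian of $\mathcal{K}_c$ in the sense of Definition \ref{def:reducedhess}, and by hypothesis it is positive definite.

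Finally, to conclude, I would use the standard fact that any principal submatrix of a positive definite matrix is itself positive definite (equivalently: testing the quadratic form of the reduced Hessian on vectors of the form $(x,\mathbf{0})$ gives $x\transpose H x$, which must be strictly positive for every nonzero $x$). Applied to the leading $n_H \times n_H$ block, this gives $H \in \posdefsymmetricmatrices{n_H}$, completing the proof.

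There is essentially no obstacle here; the only subtlety to handle carefully is the edge case in which the nullspace of $A$ is empty (so $Z_A$ and hence the bottom block row/column of the reduced Hessian are empty matrices). In that case the reduced Hessian reduces simply to $H$, and the conclusion is trivial under the convention on empty matrices fixed at the end of Section \ref{sec:decomp}.
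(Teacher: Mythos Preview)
Your proposal is correct and follows essentially the same approach as the paper: both identify the nullspace basis $Z = \begin{bmatrix} \identm{n_H} & \\ & Z_A \end{bmatrix}$ for the constraint Jacobian of $\mathcal{K}_c$, compute the reduced Hessian explicitly as $\begin{bmatrix} H & S\transpose Z_A \\ Z_A\transpose S & Z_A\transpose G Z_A \end{bmatrix}$, and conclude by observing that $H$ is a principal submatrix of this positive definite matrix. Your treatment of the empty-nullspace edge case is a nice addition not made explicit in the paper.
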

\begin{proof}
        The constraint Jacobian of $\mathcal{K}_c$ has $Z = \begin{bmatrix} \identm{n_H} & \\ & Z_A\end{bmatrix}$ as a matrix whose columns span its nullspace, where the columns of $Z_A$ span the nullspace of $A$. We can calculate the reduced Hessian $R_c$ of KKT matrix $\mathcal{K}_c$ as \[R_c = \begin{bmatrix} H & S\transpose Z_A \\ Z_A\transpose S & Z_A\transpose G Z_A \end{bmatrix} .\]
        This matrix can only be positive definite if the principal submatrix $H \in \mathbb{S}^{n_H}$, of $R_C$, is positive definite.
\end{proof}
This also means that in this case $H$ is invertible. This fact is used in the proof of Theorem \ref{theorem:redHessposdef}:
\begin{theorem}
        The reduced Hessian of $\mathcal{K}_c$ \eqref{defschur} has a positive definite reduced Hessian if and only if $H$ and $\widehat{\mathcal{K}}_c$, the reduced Hessian of the transformed KKT matrix, are positive definite.
        \label{theorem:redHessposdef}
\end{theorem}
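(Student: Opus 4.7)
The plan is to reduce everything to a single congruence transformation of $\mathcal{K}_c$ and then invoke Sylvester's Law of Inertia (Theorem~\ref{Sylvesters}) together with the inertia characterization of positive definiteness of reduced Hessians (Theorem~\ref{inertiath} and its corollary).

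First, for the ``only if'' direction, Lemma~\ref{Hpos} already gives that positive definiteness of the reduced Hessian of $\mathcal{K}_c$ implies $H \succ 0$, and in particular $H$ is invertible. For the ``if'' direction, invertibility of $H$ is immediate from the assumption $H\succ 0$. In either case we may thus assume $H$ is invertible, which enables the block elimination that follows.

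The key step is to exhibit an explicit congruence that block-diagonalizes $\mathcal{K}_c$. Define
\[
M = \begin{bmatrix} I_{n_H} & -H^{-1}S\transpose & 0 \\ 0 & I_{n_G} & 0 \\ 0 & 0 & I_{m} \end{bmatrix},
\]
which is invertible (it is unit upper block-triangular). A direct computation shows
\[
\tp{M}\,\mathcal{K}_c\,M \;=\; \begin{bmatrix} H & 0 & 0 \\ 0 & G - S H^{-1}S\transpose & A\transpose \\ 0 & A & 0 \end{bmatrix} \;=\; \begin{bmatrix} H & 0 \\ 0 & \widehat{\mathcal{K}}_c \end{bmatrix}.
\]
By Sylvester's Law of Inertia, $\text{inertia}(\mathcal{K}_c) = \text{inertia}(H) + \text{inertia}(\widehat{\mathcal{K}}_c)$.

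Finally, the two conditions are translated via the corollary to Theorem~\ref{inertiath}. Since $A$ has full row rank $m$, the reduced Hessian of $\widehat{\mathcal{K}}_c$ is positive definite iff $\text{inertia}(\widehat{\mathcal{K}}_c)=(n_G, m, 0)$; likewise, the reduced Hessian of $\mathcal{K}_c$ is positive definite iff $\text{inertia}(\mathcal{K}_c)=(n_H+n_G, m, 0)$. Combining with the additivity above, the reduced Hessian of $\mathcal{K}_c$ is positive definite iff $\text{inertia}(H)=(n_H,0,0)$ and $\text{inertia}(\widehat{\mathcal{K}}_c)=(n_G,m,0)$, i.e.\ iff both $H$ and the reduced Hessian of $\widehat{\mathcal{K}}_c$ are positive definite. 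The only nontrivial ingredient is the congruence identity above; once that is in place the result is a bookkeeping exercise on triples. The sole point requiring care is that invertibility of $H$ (needed to write down $M$) is not assumed in the ``only if'' direction, but is supplied by Lemma~\ref{Hpos}.
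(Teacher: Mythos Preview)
Your proof is correct and follows essentially the same approach as the paper: both exhibit the identical block-Gaussian congruence (your $M$ is precisely the transpose of the paper's elimination matrix, since $H$ is symmetric) to block-diagonalize $\mathcal{K}_c$ into $\operatorname{diag}(H,\widehat{\mathcal{K}}_c)$, then invoke Sylvester's law and the inertia characterization. If anything, you are slightly more careful than the paper in explicitly noting that invertibility of $H$---needed to even write down the congruence---is supplied by Lemma~\ref{Hpos} in the ``only if'' direction and by assumption in the ``if'' direction.
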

\begin{proof}
        Define the invertible matrix $S = \begin{bmatrix} \identm{} & & \\ -SH \mintranspose& \identm{} & \\ & & \identm{} \end{bmatrix}$, such that $S\mathcal{K}_c S\transpose= \begin{bmatrix} H & \\ & \widehat{\mathcal{K}}_s \end{bmatrix}$.
        Then by Sylvester's law of inertia\[\text{inertia}(\mathcal{K}_c) = \text{inertia}(H) + \text{inertia}(\widehat{\mathcal{K}}_c). \]
        \\
        $\Rightarrow$ Since the reduced Hessian of $\mathcal{K}_c$ is positive definite, $\text{inertia}(\mathcal{K}_c) = (n_H+n_G, m_A,0)$ and because of Theorem \ref{Hpos}, $\text{inertia}(H) = (n_H, 0,0)$. Such that $\text{inertia}(\widehat{\mathcal{K}}_c) = (n_G, m_A,0)$.
        \\
        $\Leftarrow$  Since $\text{inertia}(H) = (n_H, 0,0)$ and $\text{inertia}(\widehat{\mathcal{K}}_c) = (n_G, m_A,0)$, $\text{inertia}(\mathcal{K}_c) = (n_H+n_G, m_A,0)$.
\end{proof}
\subsection*{Symmetric transformation step}
\begin{definition}
        A symmetric transformation step transforms a KKT system $\mathcal{K}_t$ to a KKT system $\widehat{\mathcal{K}}_t$
        \begin{equation}\label{vartransdef}
                \widehat{\mathcal{K}}_t = M\transpose \mathcal{K}_t M,
        \end{equation}
        with $M \in \mathbb{R}^{(m+n)\times(m+n)}$ invertible.
\end{definition}
\begin{theorem}
        The reduced Hessian of KKT matrix $\mathcal{K}_t$ \eqref{vartransdef} is positive definite if and only if the reduced Hessian of $\widehat{\mathcal{K}}_t$ is positive definite.
\end{theorem}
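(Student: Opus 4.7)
The plan is to reduce everything to Sylvester's Law of Inertia (Theorem \ref{Sylvesters}) together with the Corollary that characterises positive definiteness of a KKT matrix's reduced Hessian by its inertia being $(n,m,0)$. These two ingredients have already been assembled in the appendix specifically so that the symmetric transformation case collapses to almost nothing.

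Concretely, since $M$ is invertible, the two matrices $\mathcal{K}_t$ and $\widehat{\mathcal{K}}_t = M\transpose \mathcal{K}_t M$ are congruent, so Sylvester's Law immediately gives $\text{inertia}(\mathcal{K}_t) = \text{inertia}(\widehat{\mathcal{K}}_t)$. The definition of a symmetric transformation step specifies that both $\mathcal{K}_t$ and $\widehat{\mathcal{K}}_t$ are KKT matrices of the same total size $m+n$ and with the same partition into $n$ primal and $m$ dual variables. The Corollary following Theorem \ref{inertiath} then applies to both sides, stating that each of these matrices has positive definite reduced Hessian if and only if its inertia equals $(n,m,0)$. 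Combining the two facts yields the biconditional in one line: the reduced Hessian of $\mathcal{K}_t$ is positive definite iff $\text{inertia}(\mathcal{K}_t)=(n,m,0)$ iff $\text{inertia}(\widehat{\mathcal{K}}_t)=(n,m,0)$ iff the reduced Hessian of $\widehat{\mathcal{K}}_t$ is positive definite.

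I do not expect a real obstacle here. The substitution and Schur complement cases earlier in the appendix required genuine linear-algebraic work because those steps change the block sizes of the KKT matrix, whereas a pure congruence by an invertible matrix cannot change the inertia at all. The only ancillary point worth checking for completeness is that the constraint Jacobian of $\widehat{\mathcal{K}}_t$ still has full row rank, so that the hypothesis of the Corollary (inherited from Theorem \ref{inertiath}) is satisfied on the transformed side; this follows from invertibility of $M$ together with the fact that $\widehat{\mathcal{K}}_t$ is assumed to be a valid KKT system by the definition of the symmetric transformation step.
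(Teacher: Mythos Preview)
Your proof is correct and follows essentially the same approach as the paper's own argument, which simply states that the result follows directly from Sylvester's law of inertia together with Theorem~\ref{inertiath}. You have merely spelled out the chain of equivalences more explicitly and added the remark about full row rank of the transformed constraint Jacobian, which the paper leaves implicit.
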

\begin{proof}
        This result follows directly from Sylvester's law of inertia, Theorem \ref{Sylvesters}, and Theorem \ref{inertiath}.
\end{proof}

\end{document}